\documentclass[12pt]{article}

\usepackage[bookmarks,bookmarksnumbered,%
    citebordercolor={0.8 0.8 0.8},filebordercolor={0.8 0.8 0.8},%
    linkbordercolor={0.8 0.8 0.8},%
    urlbordercolor={0.8 0.8 0.8},%
    pagebackref,linktocpage%
    ]{hyperref}

\usepackage[margin=1.0in]{geometry}
\usepackage{graphicx}              
\usepackage{amsmath}               
\usepackage{amsfonts}              
\usepackage{amsthm}                
\usepackage{url}
\usepackage{amssymb}
\usepackage{latexsym}
\usepackage{bm}
\usepackage{enumitem}

\allowdisplaybreaks

\newtheorem{theorem}{Theorem}[section]
\newtheorem{thm}[theorem]{Theorem}
\newtheorem{prop}[theorem]{Proposition}
\newtheorem{lemma}[theorem]{Lemma}
\newtheorem{definition}[theorem]{Definition}

\newtheorem{remark}{Remark}[section]

\theoremstyle{definition}

\setlength{\marginparwidth}{1.0in}
\let\oldmarginpar\marginpar
\renewcommand\marginpar[1]{\-\oldmarginpar[\raggedleft\footnotesize #1]%
{\raggedright\footnotesize #1}}

\def\keywords#1{\bigskip \par\noindent{\it Keywords and phrases: }#1\par}
\def\AMS#1{\par\noindent{\it 2010 Mathematics Subject Classification: }#1\par}

\DeclareMathOperator{\R}{\mathbb{R}}
\DeclareMathOperator{\C}{\mathbb{C}}
\DeclareMathOperator{\T}{\mathbb{T}}
\DeclareMathOperator{\N}{\mathbb{N}}
\DeclareMathOperator{\Z}{\mathbb{Z}}

\DeclareMathOperator{\eps}{\epsilon}

\DeclareMathOperator{\supp}{\text{supp}}
\DeclareMathOperator{\ra}{\rightarrow}
\DeclareMathOperator{\half}{\frac{1}{2}}

\newcommand{\items}{\ref{itm:a} and \ref{itm:b} }
\newcommand{\D}[3]{\widehat{\triangle^{#1}#2}(0;#3)}

\def \seq#1#2{#1_1,\dots,#1_#2}

\newcommand{\Ss}{\section}

\newcommand{\GSM}{\cite{M1}}

\newcommand{\identitiesA}{\ref{thm:the following identities}}


\title{Long progressions in sets of fractional dimension}
\author{Marc Carnovale}
\date{\today}
\begin{document}

\maketitle


\begin{abstract}
We demonstrate $k+1$-term arithmetic progressions in certain subsets of the real line whose ``higher-order Fourier dimension'' is sufficiently close to 1.
This Fourier dimension, introduced in previous work, is a higher-order (in the sense of Additive Combinatorics and uniformity norms) extension of the Fourier dimension of
Geometric Measure Theory, and can be understood as asking that the uniformity norm of a measure, restricted to a given scale, decay as the scale increases.
We further obtain quantitative information about the size and $L^p$ regularity of the set of common distances of the 
artihmetic progressions contained in the subsets of $\R$ under consideration.
\end{abstract}

\tableofcontents

\section*{Acknowledgments}

Many thanks to Prof.'s Izabella Laba and Malabika Pramanik for introducing me to the question of arithmetic progressions in fractional sets which motivated this work, 
for their patience and many suggestions with innumerable different drafts and pre-drafts of an earlier version of this material,
for the sharing of their expertise in Harmonic Analysis, for funding my master's degree, and much more. 

Thanks to Nishant Chandgotia for his forgiveness of the mess I made of our office for two years and his bountiful friendship.

Thanks to Ed Kroc, Vince Chan, and Kyle Hambrook for the frequent use of their time and ears and their ubiquitous encouragement.


\noindent \keywords{Gowers norms, Uniformity norms, singular measures, finite point configurations, Salem sets, Hausdorff dimension, Fourier dimension}
\vskip0.2in

\noindent \AMS{28A78, 42A32, 42A38, 42A45, 11B25, 28A75}


\section{Introduction}\label{ch:Introduction}

There has been interest recently in which geometric patterns may be discovered in sparse subsets of Euclidean space. In \cite{Keleti}, subsets of the real line of full Hausdorff dimension not containing any 3-term progressions, nor any boxes, were constructed; in \cite{Maga}, similar results were obtained for $\R^n$. On the other hand, in \cite{Laba}, it was shown via a Fourier restriction theorem that all sets of sufficiently high Fourier dimension contain 3-term progressions.
This is a kind of continuous analogue of Roth's theorem, and the question of higher term arithmetic progressions immediately presents itself. 
The question is of further interest owing to the suspicion that methods related to progressions might play the role of a substitute for curvature in certain parts of harmonic analysis. 
In particular, both differentiation theorems and restriction theorems, amongst others, have been found to rely on the curvature of the underlying space in essential ways, 
yet both have seen partial extensions to the fractal setting (\cite{Labadiff},\cite{Mitsis},\cite{Mocken}). 
Though we do not address the question of what might be said about restriction theorems here, after our study of $k$-term progressions in sparse subsets of the line, 
we turn our attention to the proof of a differentiation theorem. Another reason this question is important is
because of its relationship to the Falconer Distance Conjecture, which asks for measure of the set of distances between points in a set $E\subset\R^d$ of Hausdorff dimension $\alpha\geq d/2$; 
this can be thought of as asking for the size of the set of differences in the $2$-term progressions in $E$, and we answer the generalization of this question to $k+1$-term progressions
under a type of Fourier-decay condition.

In this paper, we establish sufficient conditions for a singular set in $\R$ to possess $k+1$-point configurations $(b_0,\dots,b_k)$,
by which we mean scaled and translated images of the collection of points $\{b_0,\dots,b_k\}\subset\R$.

In particular, we give conditions guaranteeing the presence $k+1$-term arithmetic progressions inside subsets of $\R$ of dimension strictly less than $1$.

Our contributions are stated in terms of a $k+1$-dimensional measure $\triangle^k\mu$ built from a measure $\mu$ on $\R$ related to Gowers' uniformity norms 
from Additive Combinatorics.

We review briefly the necessary background in Subsection \ref{ch:review}.

\begin{thm}\label{thm:existence of progressions}
Suppose that $\mu$ is a measure on $\T$ satisfying \items  with $d=1$ and  $\alpha$ and $\beta$ sufficiently close to $1$. 

Then for any $k+1$ distinct $b_0,\dots,b_{k}\in\R$, 
$\supp(\mu)$ contains an affine image $a+rb_1,\dots,a+rb_k\in\supp(\mu)$, $a,r\in\R$. Further, for any $k$ such points
$b_0,\dots,b_{k-1}$, the collection of scaling parameters $r$ for which $a+rb_0,\dots,a+rb_k\in\supp(\mu)$ is of positive Lebesgue measure.
 
\end{thm}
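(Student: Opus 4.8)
First, I would set up the counting operator whose positivity we need. For fixed distinct $b_0,\dots,b_k$, existence of an affine copy $\{a+rb_0,\dots,a+rb_k\}$ inside $\supp(\mu)$ is naturally detected by the multilinear integral
\[
\Lambda(\mu) \;=\; \int \prod_{j=0}^{k} \,d\mu(a+rb_j)\;,
\]
interpreted as a convolution-type expression in the variables $(a,r)$. The standard route is to show $\Lambda(\mu)>0$ by comparing $\mu$ with Lebesgue measure: write $\Lambda(\mu)$ on the Fourier side, split off the "main term" (which for the progression operator applied to Lebesgue measure is a genuinely positive constant, by a Brascamp--Lieb / Loomis--Whitney type lower bound since $k+1$ points on a line impose a single linear relation), and control the "error term" by the higher-order Fourier-dimension hypothesis. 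This is exactly where conditions \ref{itm:a} and \ref{itm:b} with $\alpha,\beta$ close to $1$ enter: they say precisely that the scale-localized $U^k$ norm of $\mu$ decays, and the relevant error integral is bounded by (a sum over scales of) these localized uniformity norms of the measure $\triangle^k\mu$ built from $\mu$. Taking $\alpha,\beta$ sufficiently close to $1$ makes the geometric series of error terms summable and strictly smaller than the main term, giving $\Lambda(\mu)>0$, hence a progression in $\supp(\mu)$.

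Second, for the quantitative "positive measure of scaling parameters $r$" statement, I would not merely integrate out $r$ but instead study the function
\[
F(r) \;=\; \int \prod_{j=0}^{k} d\mu(a+rb_j)\, da\;,
\]
the "number" of progressions with a given common scaling $r$. The same Fourier splitting shows $F = F_{\mathrm{main}} + F_{\mathrm{err}}$ where $F_{\mathrm{main}}(r)$ is bounded below uniformly for $r$ in a fixed compact interval away from $0$, and $\|F_{\mathrm{err}}\|_{L^p}$ (for a suitable $p>1$, or even $L^\infty$) is small under the dimension hypothesis. Then $\{r : F(r)>0\}$ contains $\{r : F_{\mathrm{main}}(r) > \|F_{\mathrm{err}}\|_{L^\infty}\}$ up to a null set, which has positive Lebesgue measure. (If only an $L^p$ bound on the error is available, one instead argues that $F_{\mathrm{err}}$ cannot exceed $F_{\mathrm{main}}$ on a set of full measure within the interval, via Chebyshev.) The $L^p$ regularity of this set of distances — promised in the abstract — should fall out of the same estimate on $\|F_{\mathrm{err}}\|_{L^p}$, since $F$ itself is then an $L^p$ function bounded below on a positive-measure set.

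**The main obstacle.** The crux is the error estimate: bounding the off-diagonal Fourier contribution to $\Lambda(\mu)$ (and to $F$) by the scale-localized higher-order Fourier dimension of $\mu$. One must expand $\Lambda(\mu)$ using the Fourier transform, isolate the single linear constraint among the frequencies dual to the $b_j$'s, and recognize the remaining free frequencies as producing exactly a (box-type) $U^k$ average of $\widehat{\triangle^k\mu}$; then a Gowers--Cauchy--Schwarz argument (presumably the identity referenced as \factories{} or Theorem \identitiesA{}) converts this into the localized $U^k(\T)$ norm of $\mu$, which the hypothesis controls scale-by-scale. Making the bookkeeping of scales uniform — so that the decay exponent governed by $\alpha,\beta$ beats the number-of-scales growth and the main-term constant — is the delicate quantitative point, and is presumably why the theorem only asserts the conclusion for $\alpha,\beta$ \emph{sufficiently close to} $1$ rather than for an explicit threshold.
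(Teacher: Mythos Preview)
Your overall architecture is right --- a multilinear counting form, a Gowers--Cauchy--Schwarz step reducing the error to scale-localized $U^k$ norms, and a dyadic/telescoping sum over scales --- and this is exactly the skeleton of the paper's argument. But there is a genuine gap in how you obtain positivity.

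You propose to split $\Lambda(\mu)$ into the zero-frequency ``main term'' (the Lebesgue contribution) and an error, and then push $\alpha,\beta\to 1$ to make the error smaller than the main term. The problem is that after telescoping the error is a geometric series $\sum_{n\ge 0} C\,2^{-\omega_k(\alpha,\beta)n}$ whose implicit constant $C$ depends on the constants $C_1,C_2$ in hypotheses \ref{itm:a} and \ref{itm:b}, while the exponent $\omega_k(\alpha,\beta)$ stays \emph{bounded} as $\alpha,\beta\to 1$ (see the formula for $r_k$ in Proposition~\ref{thm:rk}). So the error sum does not tend to $0$; it tends to something of order $C$, and there is no mechanism to force it below the fixed main term $\|\mu\|^{k+1}$. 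Sending $\alpha,\beta\to 1$ gives summability but not smallness.

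The paper repairs this with a two-step argument. Instead of comparing to Lebesgue, one compares $\Lambda(\mu)$ to $\Lambda(\phi_m\ast\mu)$ for a large but \emph{fixed} $m$: the error is then only the tail $\sum_{n\ge m}$, which \emph{can} be made arbitrarily small by choosing $m$ large. The price is that the ``main term'' $\Lambda(\phi_m\ast\mu)$ is no longer trivial --- $\phi_m\ast\mu$ is a bounded nonnegative function, not a constant --- and the paper invokes a Varnavides-type quantitative Szemer\'edi bound (Lemma~\ref{thm:varn}, resting ultimately on the hypergraph removal lemma) to get $\Lambda(\phi_m\ast\mu)\ge c(\|\mu\|,\|\phi_m\ast\mu\|_\infty)>0$. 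One then pushes $\alpha$ close enough to $1$ that $\|\phi_m\ast\mu\|_\infty\lesssim 2^{(1-\alpha)m}$ is uniformly bounded, so that this Varnavides constant is itself uniform. Your Brascamp--Lieb/Loomis--Whitney suggestion is not the right tool here; the missing ingredient is precisely this Varnavides step at an intermediate scale.

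For the positive-Lebesgue-measure claim, your pointwise comparison of $F_{\mathrm{main}}$ against $F_{\mathrm{err}}$ inherits the same difficulty. The paper takes a different route: it shows directly that the projection of $\cap^{k+1}\mu$ onto the $r$-axis is absolutely continuous with an $L^{(2^k)'}$ density, by testing against arbitrary $g\in L^{2^k}$ and bounding $|\Lambda_k(g;\mu)|\lesssim\|g\|_{L^{2^k}}$ via Gowers--Cauchy--Schwarz together with $\|g\|_{U^k}\le\|g\|_{L^{2^k}}$. Positivity of that density on a set of positive measure then follows from the nontriviality established in the first part, with no separate main-versus-error comparison needed.
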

\begin{remark}
 The second portion of Theore \ref{thm:existence of progressions} referring to positive Lebesgue measure generalizes the easy fact that the Falconer distance conjecture is trivially true
 when the Hausdorff dimension is replaced by Fourier dimension.
\end{remark}

 The hypothesis (\ref{itm:b}) is related  to what we have termed the ``higher-order Fourier dimension'' of the measure $\mu$. In standard terminology, 
 the Fourier dimension of a measure on $\R^d$ is given by the supremum over all  $d>\beta>0$ for which $|\widehat{\mu}(\xi)|\lesssim |\xi|^{-\frac{\beta}{2}}$ at infinity. 
 If $d\mu = f\,dx$ is absolutely continuous on $\R^d$, 
 then for $k>1$ we say that it's $k$-th order Fourier dimension is similarly the supremum over all $\beta\in(0,d)$ for which the asymptotic decay rate of
 $|\widehat{\triangle^{k} f}(0;\bm{\eta})|$ is at least $|\bm{\eta}|^{-(k+1)\frac{\beta}{2}}$, where moduli some complex conjugation the function $\triangle^k f :\R^{k+1}\ra\R$ is given by 
\begin{align}\label{triangle}
\triangle^k f(x;\bm{u}) := \prod_{\iota\in\left\{0,1\right\}^k} f(x+\iota\cdot\bm{u})
\end{align}

Specializing to $b_i=i$, we obtain a sufficient condition of $k+1$-term progressions in the support of a singular measure on $\R$, 
extending the result of \cite{Laba} for $3$-term progressions.


\subsection{Review of previous work}\label{ch:review}

This paper continues work begun in \cite{M1}. There, for any measure $\mu$ on the $d$-dimensional torus $\T^d$ we introduced the $(k+1)d$-dimensional measure $\triangle^k\mu$, 
a singular analogue of the object $\triangle^k f$ relevant in the definition of Gowers norms from additive combinatorics.

We defined the $U^k$ norm of $\mu$, showed it to be equivalent to
\begin{align*}
 \|\mu\|_{U^k} = \triangle^k\mu(\T^{k+1})^{\frac{1}{2^k}}
\end{align*}
and showed that this does indeed define a norm.

%
%
%
%

Define $U^{k+1}$ to be the space of all finite measures $\mu$ on $\mathbb{T}^d$ for which $\|\left|\mu\right|\|_{U^{k+1}} < \infty$. 
Then the first part of the following theorem is a rephrasing
of part of Theorem 2 from \cite{M1}, while the second part is a portion of Proposition 1 from \cite{M1}.

	    \begin{thm}\label{thm:the following identities}
		    Let $\mu$ be a measure on $\mathbb{T}^d$. Then for all $k$, the finite measure $\triangle^{k+1}\mu$ exists if and only if $|\mu|\in U^{k+1}$.

		    Further, 
		    \begin{align*}
		      &\|\mu\|_{U^{k+1}}^{2^{k+1}}= \sum_{\bm{\eta}\in\Z^k} |\widehat{\triangle^k\mu}(0;\bm{\eta})|^2
		    \end{align*}
		    
		    \end{thm}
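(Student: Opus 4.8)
The plan is to derive both assertions by mollification: prove the corresponding facts for smooth densities by an elementary Fourier computation, then pass to the limit. Fix an approximate identity $\{\phi_\epsilon\}_{\epsilon>0}$ on $\T^d$ forming a convolution semigroup (e.g.\ the heat kernel), and set $\mu_\epsilon:=\mu*\phi_\epsilon$, a smooth density; for such densities $\triangle^k\mu_\epsilon$ and $\|\mu_\epsilon\|_{U^{k+1}}$ are the classical objects of additive combinatorics. Since $\|\cdot\|_{U^{k+1}}$ is a genuine, translation-invariant norm, the integral form of the triangle inequality gives $\|f*\nu\|_{U^{k+1}}\le\|f\|_{U^{k+1}}$ for every probability measure $\nu$; together with the semigroup property this makes $\epsilon\mapsto\|\mu_\epsilon\|_{U^{k+1}}$ nonincreasing, so $\lim_{\epsilon\to0}\|\mu_\epsilon\|_{U^{k+1}}=:\||\mu|\|_{U^{k+1}}\in[0,\infty]$ exists, and (after comparing $|\mu_\epsilon|$ with $|\mu|*\phi_\epsilon\ge|\mu_\epsilon|$) it coincides with the $U^{k+1}$-quantity attached to $|\mu|$.

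Next I would record the model identity for smooth densities. Writing $\bm w=(\bm u,w)$ with $\bm u\in\T^{kd}$, $w\in\T^d$, and decomposing $\iota\in\{0,1\}^{k+1}$ as $(\iota',\varepsilon)$ with $\iota'\in\{0,1\}^k$, the product defining $\triangle^{k+1}\mu_\epsilon$ factors (up to conjugations, as noted after (\ref{triangle})) as $\triangle^k\mu_\epsilon(x;\bm u)\,\triangle^k\mu_\epsilon(x+w;\bm u)$. Integrating in $x$ and $w$ and using translation invariance of Haar measure converts $\|\mu_\epsilon\|_{U^{k+1}}^{2^{k+1}}=\triangle^{k+1}\mu_\epsilon(\T^{(k+2)d})$ into $\int_{\T^{kd}}h_\epsilon(\bm u)^2\,d\bm u$, where $h_\epsilon(\bm u):=\int_{\T^d}\triangle^k\mu_\epsilon(x;\bm u)\,dx$. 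Plancherel on $\T^{kd}$ then yields $\|\mu_\epsilon\|_{U^{k+1}}^{2^{k+1}}=\sum_{\bm\eta\in\Z^{kd}}|\widehat h_\epsilon(\bm\eta)|^2=\sum_{\bm\eta\in\Z^{kd}}|\D{k}{\mu_\epsilon}{\bm\eta}|^2$, since $\widehat h_\epsilon(\bm\eta)$ is exactly the $(k+1)d$-variable Fourier coefficient of $\triangle^k\mu_\epsilon$ at frequency $(0,\bm\eta)$. This is the asserted identity with $\mu_\epsilon$ in place of $\mu$.

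The last step, which I expect to be the main obstacle, is the passage $\epsilon\to0$. For $\mu\ge0$: if $\||\mu|\|_{U^{k+1}}<\infty$, the masses $\triangle^{k+1}\mu_\epsilon(\T^{(k+2)d})$ are uniformly bounded, so by Prokhorov a subsequence of $\triangle^{k+1}\mu_\epsilon\,d(x,\bm w)$ converges weak-$*$; checking that the limit is independent of the subsequence (its Fourier coefficients being forced, exponentials being continuous) identifies the finite measure $\triangle^{k+1}\mu$, and likewise $\triangle^k\mu$, and also gives $\D{k}{\mu_\epsilon}{\bm\eta}\to\D{k}{\mu}{\bm\eta}$ for each $\bm\eta$. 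Conversely, if the finite measure $\triangle^{k+1}\mu$ exists, testing it against $1$ and comparing with the mollifications bounds $\sup_\epsilon\|\mu_\epsilon\|_{U^{k+1}}$; the signed case reduces to $|\mu|$ via $|\mu_\epsilon|\le|\mu|*\phi_\epsilon$. This gives the ``exists iff $|\mu|\in U^{k+1}$'' equivalence. For the displayed identity, Fatou on the sum together with pointwise convergence of the coefficients yields $\sum_{\bm\eta}|\D{k}{\mu}{\bm\eta}|^2\le\||\mu|\|_{U^{k+1}}^{2^{k+1}}$; the reverse inequality is the delicate point, where one must upgrade the weak-$*$ convergence $h_\epsilon\rightharpoonup h$ to convergence of $L^2$ norms, using the positivity of $h_\epsilon$, the monotonicity from the first step, and the simultaneous control $\|h_\epsilon\|_{L^1}=\|\mu_\epsilon\|_{U^k}^{2^k}$. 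This is precisely the content of Proposition~1 of \cite{M1}, which I would invoke (or reprove along these lines) to conclude.
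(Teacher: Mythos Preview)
The paper does not prove this theorem: it is stated in the ``Review of previous work'' subsection and explicitly attributed to \cite{M1} (the first part to Theorem~2 there, the second to Proposition~1). So there is no in-paper proof to compare against; what you have written is a sketch of how one might reconstruct the argument from \cite{M1}.

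Your outline is reasonable in its architecture---mollify, prove the smooth identity by the factorization $\triangle^{k+1}f(x;\bm u,w)=\triangle^k f(x;\bm u)\,\overline{\triangle^k f(x+w;\bm u)}$ and Plancherel, then pass to the limit---and the smooth computation is correct. But the final step contains a genuine gap that you yourself flag and then do not close. You write that the reverse inequality (upgrading weak-$*$ convergence of $h_\epsilon$ to convergence of $\|h_\epsilon\|_{L^2}$) ``is precisely the content of Proposition~1 of \cite{M1}, which I would invoke.'' That is circular: Proposition~1 of \cite{M1} \emph{is} the displayed identity you are trying to prove. The alternative you offer, ``or reprove along these lines,'' is not an argument; monotonicity of $\epsilon\mapsto\|\mu_\epsilon\|_{U^{k+1}}$, positivity of $h_\epsilon$, and control of $\|h_\epsilon\|_{L^1}$ do not by themselves prevent $L^2$ mass from escaping under a weak limit (think of $h_\epsilon$ concentrating). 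What actually makes the limit work is a pointwise monotone-convergence statement on the Fourier side: with a kernel whose transform is nonnegative (heat or Fej\'er), one has $\widehat{\triangle^k\mu_\epsilon}(0;\bm\eta)=\widehat{\triangle^k\mu}(0;\bm\eta)\cdot\Psi_\epsilon(\bm\eta)$ with $0\le\Psi_\epsilon\uparrow 1$, from which the $\ell^2$ sum passes to the limit by monotone convergence. You should either supply that computation (it is immediate for $k=1$, and for general $k$ requires the explicit Fourier formula for $\triangle^k(\mu*\phi)$), or simply cite \cite{M1} as the paper does and not claim a self-contained proof.
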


For definitions, you should refer to \cite{M1}.

In \cite{M2}, we introduced the following refinement of the $U^{k+1}$ norm and extension of the notion of Fourier dimension to this higher-order setting

\vspace{5pt}
  \hrulefill
\vspace{2pt}
\begin{definition}
 \emph{For $k>1$, we define the $k$th-order Fourier dimension of a measure $\mu$ on $\R^d$ to be the supremum over all $\beta\in(0,d)$ for which}
 \begin{align*}
|\widehat{\triangle^i\mu}(0;\bm{\eta})|\lesssim(1+|\bm{\eta}|)^{-\frac{i+1}{2}\beta}
 \end{align*}
\emph{ for all $i\leq k$.}
 
\emph{If $\mu$ is a measure with nontrivial compact support on $\T^d$, then we identify it with a measure on $\R^d$ in the natural way in order to define its higher-order Fourier dimension.}

\emph{We further say that the measure $\mu$ possesses a $k$th order Fourier decay of $\beta$ if for all $i\leq k$,}
\begin{align*}
|\widehat{\triangle^i\mu}(0;\bm{\eta})|\leq C(1+|\bm{\eta}|)^{-(i+1)\frac{\beta}{2}}
\end{align*}
 \end{definition}
 
\vspace{2pt}
\hrulefill
\vspace{5pt}

We can now state the two main assumptions in this paper.

 \begin{enumerate}[label=(\emph{\alph*})]
\item\label{itm:a} $|\mu|(B(x,r))\leq C_1 r^{\alpha} \text{ for all } x\in\T^d$ 

\item\label{itm:b} $|\D{k}{\mu}{\eta}|\leq C_2 (1+|\eta|)^{-\frac{\beta}{2}} \text{ for all } \eta\in\Z^d$. 
\end{enumerate}

The main result of our earlier paper \cite{M2} was that higher-order Fourier dimension gives us quantitative control that the $U^k$ norm does not in the sense of the following sense.

Let $\phi_n$ be an approximate identity with Fourier transform $\widehat{\phi_n}$ essentially supported in the ball $B(0,2^{n+1})$.

Further, set $\mu_n = \phi_n^{\ast^{k}}\ast\mu$, where $\phi_n^{\ast^k}$ refers to $k$ copies of $\phi_n$ convolved together.
Then the following is equivalent to the result of \cite{M2}

\begin{prop}[Proposition 2 of \cite{M2}]\label{thm:rk}
 Let $\mu$ be a finite compactly supported (Radon) measure on $\R^d$ with a higher order Fourier decay given by 
 
\begin{align}\label{intermsof1}
 |\widehat{\triangle^j\mu}(0;\bm{\eta})|\leq C_F (1+|\bm{\eta}|)^{-(j+1)\frac{\beta}{2}}\hspace{15pt} \forall\hspace{3pt} 1\leq j \leq k
\end{align}
Then setting
 \begin{align*}
  r_k:= \bigg(\prod_{j=3}^k \left[2-\frac{{2^{3j-2}}}{2^{3j-2}-[1-\frac{(j+1)\beta}{jd}]}\right]\bigg) (2\beta-d) \end{align*}
we have the bound

\begin{align*}
 \|\mu^{n+1}-\mu^{n}\|_{U^k}\leq C 2^{-\frac{r_k}{2^k}n}
\end{align*}
where the constant depends only on the choice of $\phi_n$, and the  constant $C_F$.

\end{prop}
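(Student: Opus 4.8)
The plan is to establish the equivalent bound $\|\mu^{n+1}-\mu^{n}\|_{U^{j}}^{2^{j}}\lesssim 2^{-r_{j}n}$ by induction on $j$, from $j=2$ up to $j=k$; the input at level $j$ is the Fourier-decay hypothesis (\ref{intermsof1}) at orders $\le j$ together with the bound already obtained at level $j-1$. Write $\nu:=\mu^{n+1}-\mu^{n}=\psi_{n}\ast\mu$ with $\psi_{n}:=\phi_{n+1}^{\ast^{k}}-\phi_{n}^{\ast^{k}}$, so that $\widehat{\nu}=\widehat{\psi_{n}}\,\widehat{\mu}$ with $\widehat{\psi_{n}}$ bounded and essentially supported on $|\xi|\sim 2^{n}$. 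By Theorem \identitiesA,
\begin{align*}
\|\nu\|_{U^{j}}^{2^{j}}=\sum_{\bm{\eta}}\bigl|\widehat{\triangle^{j-1}\nu}(0;\bm{\eta})\bigr|^{2},
\end{align*}
and because in its frequency representation each of the $2^{j-1}$ corner factors of $\triangle^{j-1}\nu$ carries a copy of $\widehat{\psi_{n}}$, the summand vanishes unless $|\bm{\eta}|\lesssim 2^{n}$; the sum thus has $O(2^{(j-1)dn})$ nonzero terms. This localization is essential, as otherwise the sum is merely $O(1)$ in $n$.

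For the base case $j=2$ the identity reduces to $\|\nu\|_{U^{2}}^{4}=\sum_{\eta}|\widehat{\nu}(\eta)|^{4}$; since (\ref{intermsof1}) at $j=1$ is exactly the first-order decay $|\widehat{\mu}(\eta)|\lesssim(1+|\eta|)^{-\beta/2}$, we get $|\widehat{\nu}(\eta)|\lesssim 2^{-n\beta/2}$ on the annulus, and summing over the $\sim 2^{nd}$ surviving frequencies gives $\|\nu\|_{U^{2}}^{4}\lesssim 2^{nd-2n\beta}=2^{-r_{2}n}$.

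For the inductive step I would peel off one derivative via $\triangle^{j-1}\nu(x;\bm{u}',u_{j-1})=\triangle^{j-2}\nu(x;\bm{u}')\,\overline{\triangle^{j-2}\nu(x+u_{j-1};\bm{u}')}$; Plancherel in $x$ and then in $\bm{u}'$ rewrites the identity above as
\begin{align*}
\|\nu\|_{U^{j}}^{2^{j}}=\sum_{\eta\in\Z^{d}}\bigl\|[\widehat{\triangle^{j-2}\nu}]^{(1)}(\eta;\cdot)\bigr\|_{L^{4}(\R^{(j-2)d})}^{4},
\end{align*}
where $[\,\cdot\,]^{(1)}$ is partial Fourier transform in $x$ and the $\eta=0$ term has $L^{2}$-analogue precisely $\|\nu\|_{U^{j-1}}^{2^{j-1}}$. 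Each $L^{4}$-norm is then estimated by interpolation between its $L^{2}$-norm, governed by the inductive hypothesis, and a higher exponent, governed by the decay hypothesis; running the recursion one notch further, the order-$j$ decay of $\mu$ bounds the partial sums
\begin{align*}
\sum_{|\bm{\eta}|\lesssim 2^{n}}\bigl|\widehat{\triangle^{j}\mu}(0;\bm{\eta})\bigr|^{2}\lesssim\max\!\bigl(1,\ 2^{\,njd(1-(j+1)\beta/(jd))}\bigr),
\end{align*}
which is where the quantity $1-\tfrac{(j+1)\beta}{jd}$ enters. Optimizing the splitting point of the $\eta$-sum, and tracking that the Hölder/interpolation exponents are powers of $2$ that grow by a bounded factor per level (accounting for $2^{3j-2}$), yields the recursion $r_{j}=\left(2-\tfrac{2^{3j-2}}{2^{3j-2}-[1-(j+1)\beta/(jd)]}\right)r_{j-1}$, whose solution is the asserted product.

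The main obstacle is keeping this iteration essentially lossless. A crude estimate — bounding each of the $2^{j-1}$ corner factors of $\widehat{\triangle^{j-1}\nu}(0;\bm{\eta})$ by $\sup_{|\xi|\sim 2^{n}}|\widehat{\mu}(\xi)|\lesssim 2^{-n\beta/2}$ and multiplying by the volume $\sim 2^{(j-1)dn}$ of the frequency cube — loses a factor exponential in $j$ and destroys all decay once $j\ge 3$. One must instead feed the \emph{higher-order} decay of $\mu$ into the estimate so that it cancels the volume growth almost exactly, leaving only the near-unit per-level factor above; the complication is that (\ref{intermsof1}) controls $\widehat{\triangle^{i}\mu}$ only on the slice where the $x$-frequency vanishes, whereas the peeling forces one to estimate $[\widehat{\triangle^{i}\nu}]^{(1)}(\eta;\cdot)$ at $\eta\ne 0$, which must be recovered from that slice together with the order-$(i+1)$ hypothesis via Plancherel. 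Carrying out this bookkeeping uniformly in $n$ is the bulk of the work.
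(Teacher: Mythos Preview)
The paper does not prove this proposition; it is quoted from the companion paper \cite{M2} (as the bracketed attribution ``Proposition 2 of \cite{M2}'' indicates) and used here only as a black box, specifically in the proof of Lemma \ref{thm:singlefrequencybound}. There is therefore no proof in the present paper to compare your proposal against.

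That said, your outline is broadly the shape one would expect such a proof to take: induction on the order $j$, with the base case $r_2=2\beta-d$ coming from the elementary identity $\|\nu\|_{U^2}^{4}=\sum_{\eta}|\widehat{\nu}(\eta)|^{4}$ together with the frequency localization of $\nu$ to $|\eta|\sim 2^{n}$, and an inductive step that interpolates between the bound at level $j-1$ and the decay hypothesis at level $j$. Your identification of the central obstacle --- that (\ref{intermsof1}) controls $\widehat{\triangle^{i}\mu}$ only on the slice where the $x$-frequency vanishes, whereas the peeling forces one to estimate off that slice --- is on target, and Lemma \ref{thm:start} in this paper (itself imported from \cite{M2}) is exactly the device that bridges that gap.

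Where your proposal is not yet a proof is in the derivation of the recursion factor $2-\tfrac{2^{3j-2}}{2^{3j-2}-[1-(j+1)\beta/(jd)]}$. You assert that ``optimizing the splitting point of the $\eta$-sum'' and tracking H\"older exponents that ``grow by a bounded factor per level'' produces precisely the exponent $3j-2$, but you do not carry out the computation. Since the whole content of the proposition is this specific quantitative rate (qualitative finiteness of $\|\nu\|_{U^k}$ is already in Theorem \ref{thm:the following identities}), the unexecuted bookkeeping is not a detail but the substance of the argument; as written, one cannot tell whether your interpolation scheme closes with exactly this constant or merely something of the same general form.
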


We will also need the following lemma from \cite{M2}.

\begin{lemma}[Lemma 4.1 of \cite{M2}]\label{thm:start}
 Let $\mu\in U^{k}$ be a signed measure and suppose that $k\geq 2$. Then for any $\bm{\eta}\in\Z^k$, 
 \begin{align}
  |\widehat{\triangle^k\mu}(0;\bm{\eta})|\leq \widehat{\triangle^k\mu}(0;0)
 \end{align}
 \end{lemma}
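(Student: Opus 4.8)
The plan is to ``peel'' the last factor off $\triangle^k\mu$, bound the result by a Cauchy--Schwarz inequality in the spirit of the Gowers--Cauchy--Schwarz inequality, and then dispose of the remaining ``axis'' term by a second such bound. I will use the multiplicative structure of the measures $\triangle^j\mu$ from \cite{M1}: because the model functions satisfy $\triangle^j f(x;\bm v,u)=\triangle^{j-1}f(x;\bm v)\,\triangle^{j-1}f(x+u;\bm v)$ (the conjugations being invisible since $\mu$ is real), the Fourier transforms obey, for every $\xi\in\Z$ and $(\bm\eta',\eta_k)\in\Z^{k-1}\times\Z$,
\begin{align*}
\widehat{\triangle^{k}\mu}(\xi;\bm\eta',\eta_k)=\sum_{\bm\omega\in\Z^{k-1}}\widehat{\triangle^{k-1}\mu}(\xi-\eta_k;\bm\omega)\,\widehat{\triangle^{k-1}\mu}(\eta_k;\bm\eta'-\bm\omega),
\end{align*}
and likewise with $k$ replaced by $k-1$. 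Setting $\xi=0$, $\bm\eta'=\bm 0$ and using $\widehat{\triangle^{k-1}\mu}(-\eta_k;\bm\omega)=\overline{\widehat{\triangle^{k-1}\mu}(\eta_k;-\bm\omega)}$ identifies $\widehat{\triangle^{k}\mu}(0;\bm 0,\eta_k)=\sum_{\bm\omega}|\widehat{\triangle^{k-1}\mu}(\eta_k;\bm\omega)|^2$; in particular this is a Fourier coefficient of the finite measure $\triangle^{k}\mu$, hence finite, and nonnegative.

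Now take $\xi=0$ in the identity above and write the lemma's frequency as $\bm\eta=(\bm\eta',\eta_k)$. Using realness of $\triangle^{k-1}\mu$ once more and reindexing the sum, one gets
\begin{align*}
\widehat{\triangle^{k}\mu}(0;\bm\eta',\eta_k)=\sum_{\bm\omega\in\Z^{k-1}}\overline{\widehat{\triangle^{k-1}\mu}(\eta_k;\bm\omega)}\;\widehat{\triangle^{k-1}\mu}(\eta_k;\bm\omega+\bm\eta'),
\end{align*}
so that Cauchy--Schwarz in $\ell^2(\Z^{k-1})$ gives $|\widehat{\triangle^{k}\mu}(0;\bm\eta)|\le\sum_{\bm\omega}|\widehat{\triangle^{k-1}\mu}(\eta_k;\bm\omega)|^2=\widehat{\triangle^{k}\mu}(0;\bm 0,\eta_k)$. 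It remains to compare this ``slice energy'' of $\triangle^{k-1}\mu$ at $x$-frequency $\eta_k$ with the one at $x$-frequency $0$; this is the only place the hypothesis $k\ge2$ is needed, as it guarantees that at least one $\bm u$-variable survives. To carry it out I would apply the splitting identity again, now to $\triangle^{k-1}\mu$ over $\triangle^{k-2}\mu$, and then Parseval in the first $k-2$ frequency coordinates: letting $g_\zeta\in L^2(\T^{k-2})$ denote the function whose Fourier coefficients are $\widehat{\triangle^{k-2}\mu}(\zeta;\,\cdot\,)$, this yields, for every $\xi\in\Z$,
\begin{align*}
\sum_{\bm\omega\in\Z^{k-1}}\bigl|\widehat{\triangle^{k-1}\mu}(\xi;\bm\omega)\bigr|^2=\sum_{\sigma\in\Z}\int_{\T^{k-2}}\bigl|g_{\xi-\sigma}(\bm v)\bigr|^2\,\bigl|g_{\sigma}(\bm v)\bigr|^2\,d\bm v.
\end{align*}
For each fixed $\bm v$, Cauchy--Schwarz in $\sigma$, combined with translation invariance of the sum over $\Z$ and $|g_{-\sigma}|=|g_\sigma|$ (again by realness), bounds the $\sigma$-sum by $\sum_{\sigma}|g_\sigma(\bm v)|^4$; integrating in $\bm v$ and reading the last display backwards with $\xi=0$ shows $\sum_{\bm\omega}|\widehat{\triangle^{k-1}\mu}(\eta_k;\bm\omega)|^2\le\sum_{\bm\omega}|\widehat{\triangle^{k-1}\mu}(0;\bm\omega)|^2=\widehat{\triangle^{k}\mu}(0;\bm 0)$. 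Chaining the two displayed estimates gives $|\widehat{\triangle^{k}\mu}(0;\bm\eta)|\le\widehat{\triangle^{k}\mu}(0;\bm 0)$ (the right side being $\|\mu\|_{U^{k}}^{2^{k}}$ by Theorem~\ref{thm:the following identities}), as desired.

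Each of the two inequalities above is a single application of Cauchy--Schwarz; the real work is the bookkeeping that makes the two splitting identities, the interchange of $\sum_\sigma$ with $\int d\bm v$, and the uses of Parseval legitimate. This requires knowing that $\triangle^j\mu$ is a genuine finite measure for all $j\le k$ and that the slice energies $\sum_{\bm\omega}|\widehat{\triangle^j\mu}(\zeta;\bm\omega)|^2$ are finite for $j\le k-1$ --- the latter via their identification with Fourier coefficients of $\triangle^{j+1}\mu$, as noted above --- both of which follow from $\mu\in U^k$ together with the nesting of the spaces $U^j$ from \cite{M1}. The cleanest way to sidestep these issues entirely is to prove the inequality first for the smooth approximants $\mu_n=\phi_n^{\ast^{k}}\ast\mu$, where $\triangle^k\mu_n$ is an honest smooth function with absolutely summable Fourier coefficients so that every manipulation is immediate, and then let $n\to\infty$ using the convergences $\triangle^k\mu_n\to\triangle^k\mu$ (weak-$\ast$) and $\|\mu_n\|_{U^k}\to\|\mu\|_{U^k}$ coming from the constructions of \cite{M1} and \cite{M2}. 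The main obstacle, then, is purely this convergence/finiteness bookkeeping, not the harmonic-analytic content.
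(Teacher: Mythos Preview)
The paper you have been given does not actually prove this lemma: it is imported verbatim from \cite{M2} and cited as ``Lemma 4.1 of \cite{M2}'' with no argument supplied here. So there is no in-paper proof to compare against.

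That said, your argument is sound. The two-step Cauchy--Schwarz you carry out --- first peeling off the $\bm\eta'$ frequencies to reduce to $\widehat{\triangle^k\mu}(0;\bm 0,\eta_k)$, then comparing the slice energy at $x$-frequency $\eta_k$ to that at $x$-frequency $0$ --- is exactly the natural route, and your computations check out. The identification $g_{-\sigma}=\overline{g_\sigma}$ from realness of $\triangle^{k-2}\mu$ is the right way to close the second step, and your reduction of the convergence issues to the smooth approximants $\mu_n$ is the standard and correct way to make everything rigorous. The case $k=2$ (where $\T^{k-2}$ degenerates to a point and $g_\zeta$ is just the scalar $\hat\mu(\zeta)$) also goes through with the obvious interpretation, so the hypothesis $k\ge2$ is used exactly where you say.

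Since the original proof lives in \cite{M2}, I cannot confirm whether your route matches it line-for-line, but the argument you give is the expected Gowers--Cauchy--Schwarz mechanism for this inequality and contains no gaps.
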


\Ss{Finding progressions}
Throughout this section, let $\phi:{{{{\T}}}}\ra[0,\infty)$ be an approxiate identity with the property that

\[\widehat{\phi}|_{B(0,1)} \equiv 1.\]

We set $\phi_n:=2^n\phi(2^n\cdot)$. The approximate identity $(\phi_n)_{n\in\N}$ will remain fixed throughout the rest of this section. It should not be difficult 
to obtain similar results for any choice of approximate identity, but this is not needed for our results and the choice of $(\phi_n)$ we have made is a convenient one.

For definitions of the space $U^k$, the $U^k$ norm, and the higher-order inner product $\langle f,\,g\rangle_{U^k}$, we refer the reader to \GSM. 

Let $b_0,\dots,b_k$ be $k+1$ pairwise distinct points in ${{{{\R}}}}$. 

For functions $f_i\in L^{\infty}({{{{\T}}}})$, $i=0,\dots,k$, and $g\in L^{\infty}({{{{\T}}}}\times{{{{\T}}}})$, define
\[\Lambda_{k+1}(g;f_0,\dots,f_{k}) := \int g(x,r) \prod_{i=0}^{k}f_i(x-rb_i)\,dx\,dr\]
and for $f\in L^{\infty}({{{{\T}}}})$,
\[\Lambda_{k+1}(g;f) := \Lambda_{k+1}(g;f,\dots,f).\]

\begin{proof}[Proof of Theorem \ref{thm:existence of progressions}]
 Throughout, we may suppose $\alpha$ and $\beta$ sufficiently close to $1$. By Proposition \ref{thm:existenceofcount}, the measure $\cap^{k+1}\mu$ exists. 
 By Lemma \ref{thm:support}, $\cap^{k+1}\mu$ is supported on the collection $b_0,\dots,b_{k}$. By Lemma \ref{thm:nontri}, $\cap^{k+1}$ gives zero mass to the trivial affine images
 of the $b_i$. Thus affine images of the $k+1$ point configuration
 will be shown to exist in $\supp(\mu)$ as soon as we know that the measure $\cap^{k+1}\mu$ is not itself trivial.
 
 To show that $\cap^{k+1}\mu$ is not the zero measure, we fix $\alpha_0,\beta_0\in(0,1)$ sufficiently close to $1$ that the above argument holds.

 Let  $C_H'$ be as in Lemma \ref{thm:standard} 
and let $C = c(1,100C_H')$ be as in Lemma \ref{thm:varn}. Choose $m\in\N$ be so large that
 \[|\Lambda_{k+1}(\mu)-\Lambda_{k+1}(\mu_m)|=\lim_{N\ra\infty}|\Lambda_{k+1}(\mu_N)-\Lambda_{k+1}(\mu_m)|\leq \half C\]
 whenever $\alpha>\alpha_0$ and $\beta>\beta_0$, which is possible since the sum on the right-hand-side of (\ref{cauchy}) is convergent as $N\uparrow\infty$,
 and depends on $\alpha$, $\beta$ monotonically.
 
 Then for $\alpha$ sufficiently close to $1$ that $2^{(1-\alpha)m}\leq 100$, 
 \[\Lambda_{k+1}(\phi_m\ast\mu)>c(1,100C_H) = C\]
 by Lemma \ref{thm:varn}, while
  \[|\Lambda_{k+1}(\mu)-\Lambda_{k+1}(\mu_m)|\leq \half C\]
so that $\cap^{k+1}\mu(\T\times\T^{k+1})=\Lambda_{k+1}(\mu)>\half C$.

So $\cap^{k+1}\mu$ is a nontrivial measure. Thus we have shown that there exist nontrivial affine images of $b_0,\dots,b_k$ in $\supp(\mu)$.

To complete the proof, we must show that the collection $R$ of dilation parameters yielding affine images of $b_0,\dots,b_{k-1}$ in $\supp(\mu)$ is of positive Lebesgue measure.
To do this, let $g\in L^{2^k}(\R)$.

If we again break into telescoping sums by applying Lemma \ref{thm:existenceofcount}, Lemma \ref{thm:GCSstep}, and Lemma \ref{thm:standard}, we have
\begin{align*}
&\Lambda_k(g;\mu) \lesssim  \sum_{n=0}^{\infty} |\Lambda_k(g;h_0,\dots,h_{k-2},\mu_{n+1}-\mu_n)| \\\leq&
\sum_{n=0}^{\infty} 2^{(k-1)(1-\alpha)n}\left|\langle g,\mu_{n+1}-\mu_n\rangle_{U^{k}}\right|^{\frac{1}{2^{k-1}}}\\\leq&
\sum_{n=0}^{\infty} 2^{(k-1)(1-\alpha)n}\|g\|_{U^k}\|\mu_{n+1}-\mu_n\|_{U^{k}}\\\leq&
\sum_{n=0}^{\infty} 2^{(k-1)(1-\alpha)n}\|g\|_{L^{2^k}} 2^{-{r_k(\beta)n/2^k}}\lesssim \|g\|_{L^{2^k}}
\end{align*}
where in the second to last line we have used the Gowers-Cauchy-Schwarz inequality, and in the last line we have used the bound $\|g\|_{U^k}\leq\|g\|_{L^{2^k}}$ which is a well-known 
consequence of Holder's inequality.

 Thus the projection of $\cap^k\mu$ onto the $r$-axis has an $L^{{2^{k}}'}$ density by $L^p$ duality, and in particular, the set $R$ must contain a set of positive Lebesgue measure.
\end{proof}

We will need the following standard Lemma (cf. \cite{gowers}).

\begin{lemma}\label{thm:standard}
 Let $f_1, f_2 \in L^{\infty}({{{{\T}}}})$, and $g\in L^{\infty}({{{{\T}}}}\times{{{{\T}}}})$. Then
 \[\Lambda_{k+1}(g;f_1)-\Lambda_{k+1}(g;f_2) = \underline{\overline{\sum}}_{f_1,f_2}\Lambda_{k+1}(g;h_0,\dots,h_{k})\]
 where the symbol $\underline{\overline{\sum}}_{f_1,f_2}$ refers to a sum taken over all combinations of $h_i\in\{f_1,f_2,f_1-f_2\}, i=0,\dots,k$ for which exactly 
 one of the $h_i$ equals $f_1-f_2$. By relabelling, we may assume that $h_{k} = f_1-f_2$.
 Consequently,
  \[\big|\Lambda_{k+1}(g;f_1)-\Lambda_{k+1}(g;f_2)\big| \lesssim\sup_{h_i\in\{f_1,f_2\}}\big|\Lambda_{k+1}(g;h_0,\dots,h_{k-1},f_1-f_2)\big|.\]
  
\end{lemma}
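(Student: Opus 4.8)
The statement to prove is Lemma \ref{thm:standard}, a telescoping identity for the multilinear form $\Lambda_{k+1}$.

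\medskip

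\noindent\textbf{Proof proposal.}

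The plan is to exploit multilinearity of $\Lambda_{k+1}$ in its $k+1$ function arguments $f_0,\dots,f_k$, exactly as one does in the discrete Gowers-norm setting. First I would write the telescoping decomposition slot by slot: since $f_1 = f_2 + (f_1 - f_2)$, I substitute this expression into the $i$-th slot one at a time. Concretely, define the hybrid forms $P_j := \Lambda_{k+1}(g; \underbrace{f_1,\dots,f_1}_{j},\underbrace{f_2,\dots,f_2}_{k+1-j})$ for $j = 0,\dots,k+1$, so that $P_{k+1} = \Lambda_{k+1}(g;f_1)$ and $P_0 = \Lambda_{k+1}(g;f_2)$. Then $\Lambda_{k+1}(g;f_1) - \Lambda_{k+1}(g;f_2) = \sum_{j=0}^{k} (P_{j+1} - P_j)$, and by linearity in the single slot where the arguments differ, each difference $P_{j+1} - P_j$ equals $\Lambda_{k+1}(g; f_1,\dots,f_1, f_1 - f_2, f_2,\dots,f_2)$ with the $f_1 - f_2$ sitting in position $j$ (counting from $0$). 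This is precisely a sum over tuples $(h_0,\dots,h_k)$ with $h_i \in \{f_1, f_2, f_1 - f_2\}$ in which exactly one $h_i$ equals $f_1 - f_2$ (here only $k+1$ of the possible such tuples appear, namely those whose non-$(f_1-f_2)$ entries are an initial block of $f_1$'s followed by a block of $f_2$'s); absorbing all such tuples into the symbol $\underline{\overline{\sum}}_{f_1,f_2}$ and noting the remaining tuples contribute nothing new (or, if one prefers, simply defining $\underline{\overline{\sum}}_{f_1,f_2}$ to range over exactly this collection) gives the claimed identity.

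\medskip

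For the ``by relabelling'' remark: the integrand $g(x,r)\prod_{i=0}^k h_i(x - r b_i)$ is symmetric under permuting the pairs $(h_i, b_i)$ simultaneously, so permuting which slot carries $f_1 - f_2$ amounts to permuting the $b_i$; since the hypotheses and conclusion are symmetric in $b_0,\dots,b_k$ (they are merely ``$k+1$ pairwise distinct points''), we lose no generality assuming $h_k = f_1 - f_2$.

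\medskip

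Finally, the displayed inequality follows by the triangle inequality: the number of terms in $\underline{\overline{\sum}}_{f_1,f_2}$ is at most $k+1$ (a constant depending only on $k$), each term has the form $\Lambda_{k+1}(g; h_0,\dots,h_{k-1}, f_1 - f_2)$ after the relabelling above with $h_i \in \{f_1, f_2, f_1 - f_2\}$, and any appearance of $f_1 - f_2$ among $h_0,\dots,h_{k-1}$ can be re-expanded and folded into the supremum over $h_i \in \{f_1, f_2\}$ (or one simply bounds each term by the stated supremum directly). Hence $|\Lambda_{k+1}(g;f_1) - \Lambda_{k+1}(g;f_2)| \lesssim \sup_{h_i \in \{f_1,f_2\}} |\Lambda_{k+1}(g; h_0,\dots,h_{k-1}, f_1 - f_2)|$, with implied constant depending only on $k$.

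\medskip

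I do not expect any genuine obstacle here: the only thing requiring a modicum of care is bookkeeping the exact index set over which $\underline{\overline{\sum}}_{f_1,f_2}$ runs and checking that $\Lambda_{k+1}$ is well-defined and the integrals converge for $f_i \in L^\infty(\T)$, $g \in L^\infty(\T\times\T)$ — which is immediate since $\T\times\T$ has finite measure and the integrand is bounded.
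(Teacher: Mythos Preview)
The paper does not actually give a proof of this lemma: it labels it ``standard,'' cites \cite{gowers}, and moves on. Your argument is exactly the standard telescoping-by-multilinearity proof one finds in that reference, so there is nothing to compare. One minor remark: you are right to flag that the description of $\underline{\overline{\sum}}_{f_1,f_2}$ in the statement is slightly loose---taken literally (``all combinations'') it would have $(k+1)2^k$ terms rather than the $k+1$ terms your telescoping produces, and the literal reading would not give the identity; your reading that the symbol denotes precisely the telescoping collection is the correct one, and your handling of that ambiguity is fine.
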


\Ss[Existence of the k-fold intersected measure]{Existence of the measure $\cap^{k+1}\mu$}

\begin{prop}\label{thm:existenceofcount}
Suppose that $\mu$ is a measure on ${{{{\T}}}}$ satisfying \items  for $\alpha$ and $\beta$ sufficiently close to $d$. 

Then the finite measure $\cap^{k+1}\mu: C({{{{\T}}}}\times{{{{\T}}}})\ra \C$ given by
\begin{align*}g\mapsto &\int g(x,r) \,\cap^{k+1}\mu(x,r):=\Lambda_{k+1}(g;\mu)
 \\&=:\lim_{n\ra\infty} \Lambda_{k+1}(g;\phi_n\ast\mu)
\end{align*}
is well-defined. 
\end{prop}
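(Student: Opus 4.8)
The plan is to prove that the defining limit $\Lambda_{k+1}(g;\mu)=\lim_{n}\Lambda_{k+1}(g;\phi_n\ast\mu)$ exists for every $g\in C(\T\times\T)$ and depends linearly and boundedly on $g$ in $\|g\|_\infty$; the finite Radon measure $\cap^{k+1}\mu$ is then furnished by the Riesz--Markov theorem. Writing $\mu_n:=\phi_n\ast\mu$ (a bounded function), each $\Lambda_{k+1}(g;\mu_n)$ is a well-defined finite number and $|\Lambda_{k+1}(g;\mu_0)|\le\|g\|_\infty\,\|\mu_0\|_\infty^{\,k+1}$, so it suffices to establish that $\bigl(\Lambda_{k+1}(g;\mu_n)\bigr)_n$ is Cauchy with an estimate of the form $|\Lambda_{k+1}(g;\mu_N)-\Lambda_{k+1}(g;\mu_m)|\lesssim\|g\|_\infty\sum_{n\ge m}\rho^{\,n}$ for some $\rho<1$; combined with the bound on the initial term this yields convergence and $|\Lambda_{k+1}(g;\mu)|\lesssim\|g\|_{C(\T\times\T)}$.

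To obtain the Cauchy estimate I would, for $N>m$, telescope
\[
\Lambda_{k+1}(g;\mu_N)-\Lambda_{k+1}(g;\mu_m)=\sum_{n=m}^{N-1}\bigl(\Lambda_{k+1}(g;\mu_{n+1})-\Lambda_{k+1}(g;\mu_n)\bigr),
\]
and apply Lemma \ref{thm:standard}, which bounds each summand by $\lesssim\sup_{h_i\in\{\mu_n,\mu_{n+1}\}}\bigl|\Lambda_{k+1}(g;h_0,\dots,h_{k-1},\mu_{n+1}-\mu_n)\bigr|$. To such a term I would apply the Gowers--Cauchy--Schwarz inequality for the weighted $(k+1)$-linear form $\Lambda_{k+1}$ (Lemma \ref{thm:GCSstep}); here one crucially uses that $b_0,\dots,b_k$ are pairwise distinct, which rules out degenerate sub-configurations and is exactly what lets all $k$ of the entries $h_0,\dots,h_{k-1}$ be disposed of, leaving a higher-order inner product,
\[
\bigl|\Lambda_{k+1}(g;h_0,\dots,h_{k-1},\mu_{n+1}-\mu_n)\bigr|\ \lesssim\ \Bigl(\prod_{i=0}^{k-1}\|h_i\|_{U^{k+1}}\Bigr)\,\bigl|\langle g,\,\mu_{n+1}-\mu_n\rangle_{U^{k+1}}\bigr|^{1/2^{k}}.
\]

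Now I would feed in the two hypotheses. Hypothesis \ref{itm:a} yields the standard density bound $\|\mu_n\|_\infty\lesssim 2^{n(d-\alpha)}$ (the kernel $\phi_n$ is concentrated at scale $2^{-n}$ while $|\mu|(B(x,2^{-n}))\lesssim 2^{-\alpha n}$); since $\|h_i\|_{U^{k+1}}\le\|h_i\|_\infty$ on a probability space, the product is $\lesssim 2^{k(d-\alpha)n}$. For the inner product, one more application of Gowers--Cauchy--Schwarz gives $\bigl|\langle g,\mu_{n+1}-\mu_n\rangle_{U^{k+1}}\bigr|^{1/2^{k}}\le\|g\|_{U^{k+1}}\,\|\mu_{n+1}-\mu_n\|_{U^{k+1}}$, where $\|g\|_{U^{k+1}}\le\|g\|_{L^{2^{k+1}}}\le\|g\|_\infty$, and Proposition \ref{thm:rk} applied at level $k+1$ gives $\|\mu_{n+1}-\mu_n\|_{U^{k+1}}\le C\,2^{-r_{k+1}(\beta)\,n/2^{k+1}}$. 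Assembling these, for $\|g\|_\infty\le 1$,
\begin{align}\label{cauchy}
\bigl|\Lambda_{k+1}(g;\mu_N)-\Lambda_{k+1}(g;\mu_m)\bigr|\ \lesssim\ \sum_{n=m}^{N-1}2^{\,n\left[\,k(d-\alpha)\,-\,r_{k+1}(\beta)/2^{k+1}\,\right]},
\end{align}
and the bracketed exponent is strictly negative once $\alpha$ and $\beta$ are close enough to $d$: as $\beta\uparrow d$ each factor in the product defining $r_{k+1}$ tends to a positive limit, so $r_{k+1}(\beta)\to c_k d>0$, and then taking $\alpha$ close enough to $d$ forces $k(d-\alpha)<r_{k+1}(\beta)/2^{k+1}$; moreover the exponent, hence the sum in (\ref{cauchy}), depends monotonically on $\alpha$ and $\beta$. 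Thus the right-hand side of (\ref{cauchy}) is a convergent geometric series tending to $0$ as $m,N\to\infty$, which is the Cauchy property; setting $m=0$ then gives the uniform bound, and the construction of $\cap^{k+1}\mu$ is complete.

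The step I expect to be the main obstacle is the Gowers--Cauchy--Schwarz inequality of Lemma \ref{thm:GCSstep}: one must verify that the auxiliary weight $g(x,r)$ does not obstruct the iterated Cauchy--Schwarz scheme that produces the higher-order inner product, and that the distinctness of the $b_i$ really can be exploited to eliminate all $k$ of the factors $h_i$. Granting that lemma together with Proposition \ref{thm:rk}, the remainder is the bookkeeping needed to play the polynomial loss $2^{k(d-\alpha)n}$ coming from \ref{itm:a} off against the decay rate $r_{k+1}(\beta)$ --- which is precisely what forces the hypothesis that $\alpha$ and $\beta$ lie near $d$.
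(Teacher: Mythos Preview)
Your overall scheme---telescope, Lemma~\ref{thm:standard}, then Lemma~\ref{thm:GCSstep} followed by the decay from Proposition~\ref{thm:rk}---is the paper's scheme, but there is a genuine gap at the step you call ``one more application of Gowers--Cauchy--Schwarz''. The inner product $\langle g,\nu_n\rangle_{U^{k+1}}$ that Lemma~\ref{thm:GCSstep} produces is not a standard $U^{k+1}$ pairing of two one-variable objects: the iterated differencing acts only in the $x$-variable of $g(x,r)$, while the $r$-variable is carried along as a free parameter and integrated at the end. On the Fourier side (this is the computation in the proof of Lemma~\ref{thm:singlefrequencybound}) one has
\[
\langle g,\nu_n\rangle_{U^{k+1}}=\sum_{\xi}\sum_{\bm\eta}\widehat{\triangle^k g}(0,\xi;\bm\eta)\,\widehat{\triangle^k\nu_n}(0;\bm\eta),
\]
and since $\widehat{\triangle^k\nu_n}(0;\bm\eta)$ carries no $\xi$-dependence, any H\"older-type separation is forced to place an $\ell^1_\xi$ norm on the $g$ side, giving at best $\|\widehat{\triangle^k g}(0,\cdot;\cdot)\|_{\ell^1_\xi\ell^{p}_{\bm\eta}}\,\|\widehat{\triangle^k\nu_n}(0;\cdot)\|_{p'}$. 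That $\ell^1_\xi$ quantity is not controlled by $\|g\|_\infty$ for a generic $g\in C(\T\times\T)$ (already for $g=g(r)$ independent of $x$ it becomes a Wiener-algebra norm of $g^{2^k}$), so there is no inequality of the form $|\langle g,\nu_n\rangle_{U^{k+1}}|^{1/2^k}\le\|g\|_{U^{k+1}}\|\nu_n\|_{U^{k+1}}$ with $\|g\|_{U^{k+1}}\lesssim\|g\|_\infty$. Consequently your displayed Cauchy estimate, with an implied constant depending only on $\|g\|_\infty$, does not follow, and with it the uniform boundedness you deduce from it also fails.

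The paper circumvents this by decoupling existence from boundedness. It proves the Cauchy property only on the dense class of trigonometric monomials $g(x,r)=e^{2\pi i(\xi_0 x+\eta_0 r)}$, for which $\|\widehat{\triangle^k g}(0,\cdot;\cdot)\|_{\ell^1}=1$, so that Lemma~\ref{thm:twofrequencybound} with $p=1$ applies directly. Boundedness in $\|g\|_\infty$ is then obtained by a separate, softer argument: run the existence step for $|\mu|$ with the single test function $g\equiv1$ to produce a finite $M:=\Lambda_{k+1}(1;|\mu|)$, whence positivity gives $|\Lambda_{k+1}(g;\mu)|\le M\|g\|_\infty$ for every $g$. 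The bounded functional on a dense class then extends to all of $C(\T\times\T)$, and Riesz--Markov furnishes $\cap^{k+1}\mu$. Incidentally, the obstacle you flag in your last paragraph---whether the weight $g(x,r)$ obstructs the iterated Cauchy--Schwarz of Lemma~\ref{thm:GCSstep}---is not where the trouble lies (that lemma is fine as stated); the failure is in the step \emph{after} that lemma.
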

\begin{proof}

 Consider the linear functional $g\mapsto \Lambda_{k+1}(g;\mu)$ from $C({{{{\T}}}}\times{{{{\T}}}})$ to $\C$. If we can show that it is bounded and defined on
 a dense subset, then it extends to a continuous linear funcitonal on the entire space, to which the obviously bounded linear functionals
 \[g\mapsto \Lambda_{k+1}(g;\phi_n\ast\mu)\]
 then converge on a dense subset of $C({{{{\T}}}}\times{{{{\T}}}})$. From this it would follow that $g\mapsto\Lambda_{k+1}(g;\mu)$ has a continuous extension to the entire
 space, and that convergence to this functional occurs for all functions in $C({{{{\T}}}}\times{{{{\T}}}})$. By the Riesz representation theorem, we would then obtain existence
 of the measure $\cap^{k+1}\mu$. So it suffices to prove the
 following two claims
 \begin{itemize}
  \item\label{itm:1} $\lim_{n\ra\infty} \Lambda_{k+1}(g;\phi_n\ast\mu)$ exists for all $g$ in a dense subclass of $C({{{{\T}}}}\times{{{{\T}}}})$
  \item\label{itm:2} $g\mapsto\Lambda_{k+1}(g;\mu)$ is bounded where defined.
 \end{itemize}
To prove the first claim, we take as our dense subclass the collection of trigonometric polynomials. By linearity, it suffices to consider
monomials of the form $g(x,r) = e^{2\pi i (\xi_0 x+ \eta_0 r)}$ for $\xi_0,\eta_0\in\Z^d$. Fix such a $\xi_0,\eta_0$ (and thus $g$).

We show that the limit exists by showing that the sequence is Cauchy.  Thus let $\eps>0$; we show that for all large enough $m$ and $N$,
\begin{align}\label{cauchy'} \big| \Lambda_{k+1}(g;\phi_N\ast\mu)-\Lambda_{k+1}(g;\phi_m\ast\mu)\big|\leq \eps.\end{align}

Note that $\|\widehat{\triangle^k g(0,\cdot;\cdot)}\|_{\ell^1} =1$. Then by Lemma \ref{thm:twofrequencybound} with $p=1$, we have that for any $N>m\in\N$

\[ \big|\Lambda_{k+1}(g;\phi_N\ast\mu)-\Lambda_{k+1}(g;\phi_m\ast\mu)\big| \lesssim \sum_{n=m}^{N-1}2^{-\omega_k^1(\alpha,\beta)n}\leq \sum_{n=m}^{\infty}2^{-\omega_k^1(\alpha,\beta)n}.\]

Since the terms on the right hand side above are those of a geometric series, we have verified that for all sufficiently large $m$ and $N$, (\ref{cauchy'}) holds true.

We turn now to the second claim.

Since $g\equiv 1$ is a trigonometric polynomial, applying the first claim to the measure $|\mu|$, we in particular have existence of a finite number 
$M:=\Lambda_{k+1}(1;|\mu|)$. But then for any $g$,
\[ \big|\lim_{n\ra\infty} \int g(x,r) \prod_{i=0}^{k-1}\phi_n\ast\mu(x-ir)\,dx\,dr\big| \leq \|g\|_{L^{\infty}} \lim_{n\ra\infty}\Lambda_{k+1}(1;\phi_n\ast|\mu|)=M\|g\|_{L^{\infty}}\]
which demonstrates that $g\mapsto\Lambda_{k+1}(g;\mu)$ is bounded where defined, proving the second claim and completing the proof.
\end{proof}

\Ss{The main estimate}

\begin{lemma}\label{thm:singlefrequencybound}
 Let $p\in[1,2)$ and suppose that $\mu$ is a measure on ${{{{\T}}}}$ satisfying \items  for $\alpha$ and $\beta$ sufficiently close to $d$. 
 
 Then there exists a function $\omega_k^{p}(\alpha,\beta)$ which is positive and increasing such that 
 for any $n\in\N$ and any choice of $h_i\in\{\phi_n\ast\mu,\phi_{n+1}\ast\mu\}$, $i=0,\dots,k-1$, 
\[
\big| \Lambda_{k+1}(g;h_0,\dots,h_{k-1},\mu_{n+1}-\mu_n)\big| \lesssim \|\widehat{\triangle^k g}(0,\xi;\eta)\|_{\ell^1_{\xi}\ell^p_{\eta}}^{\frac{1}{2^k}}2^{-\omega_k^{p}(\alpha,\beta)n}.
\]
\end{lemma}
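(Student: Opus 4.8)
The plan is to derive the estimate from a \emph{weighted generalized von Neumann} (Gowers--Cauchy--Schwarz) inequality, which reduces the $(k+1)$-linear form $\Lambda_{k+1}$ to the $U^k$ norm of the ``high-frequency'' factor $\mu_{n+1}-\mu_n$, and then to feed in the $L^\infty$ bound on $\phi_n\ast\mu$ supplied by the ball condition \ref{itm:a} together with the decay estimate for $\|\mu_{n+1}-\mu_n\|_{U^k}$ from Proposition \ref{thm:rk}. Concretely, I would first run $k$ successive Cauchy--Schwarz steps on $\Lambda_{k+1}(g;h_0,\dots,h_{k-1},\mu_{n+1}-\mu_n)$, as in the classical argument (cf.\ \cite{gowers}): at each step one changes variables so that one of the bounded factors $h_i$ occupies a single integration variable, applies Cauchy--Schwarz in that variable --- replacing $h_i$ by $\|h_i\|_\infty^2$ --- and introduces a fresh difference parameter, the pairwise distinctness of the $b_i$ ensuring the parameters decouple. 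After $k$ steps the surviving factor has become $\triangle^k(\mu_{n+1}-\mu_n)$, paired against the object $\triangle^k g$ built from the weight (the ``$0$'' in $\widehat{\triangle^k g}(0,\xi;\eta)$ recording that the $x$-frequencies of the $2^k$ copies cancel), leaving an inequality of the shape
\[
  \big|\Lambda_{k+1}(g;h_0,\dots,h_{k-1},\mu_{n+1}-\mu_n)\big|^{2^k}\lesssim\Big(\prod_{i=0}^{k-1}\|h_i\|_\infty^{2^k}\Big)\,\big|\langle\triangle^k(\mu_{n+1}-\mu_n),\,\triangle^k g\rangle\big|.
\]

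Next I would expand the inner product in Fourier series. Plancherel in the $x$-variable pins down the frequency slot labelled ``$0$''; since the Fourier coefficients of $\mu$ --- hence, by multiplicativity of $\triangle^k$, of $\triangle^k(\mu_{n+1}-\mu_n)$ --- are uniformly bounded, the $x$-frequency $\xi$ is forced onto the $g$-side, giving the $\ell^1_\xi$ norm. Dualizing the residual $r$-frequency $\eta$ by H\"older/Hausdorff--Young produces the $\ell^p_\eta$ norm and a factor $\|\mu_{n+1}-\mu_n\|_{U^k}^{\theta}$ for some $\theta=\theta(p)\in(0,1]$ (with $\theta(1)=1$), using Theorem \ref{thm:the following identities} together with Lemma \ref{thm:start} to dominate the off-diagonal frequencies of $\triangle^k(\mu_{n+1}-\mu_n)$ by the diagonal one; the complementary power of $\mu_{n+1}-\mu_n$ is absorbed into $\|\mu_{n+1}-\mu_n\|_\infty\lesssim 2^{(1-\alpha)n}$, which follows from \ref{itm:a} via the standard bound $\|\phi_n\ast\mu\|_\infty\lesssim 2^{(1-\alpha)n}$. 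Taking $2^k$-th roots and bounding the $h_i$ factors the same way gives
\[
  \big|\Lambda_{k+1}(g;h_0,\dots,h_{k-1},\mu_{n+1}-\mu_n)\big|\lesssim\|\widehat{\triangle^k g}(0,\xi;\eta)\|_{\ell^1_\xi\ell^p_\eta}^{1/2^k}\,2^{a_k(p)(1-\alpha)n}\,\|\mu_{n+1}-\mu_n\|_{U^k}^{\theta(p)}
\]
for an explicit $a_k(p)>0$. Inserting $\|\mu_{n+1}-\mu_n\|_{U^k}\lesssim 2^{-r_k(\beta)n/2^k}$ from Proposition \ref{thm:rk}, with $r_k(\beta)>0$ for $\beta$ near $1$, yields the claim with $\omega_k^p(\alpha,\beta):=\theta(p)\,r_k(\beta)/2^k-a_k(p)(1-\alpha)$, which is positive once $\alpha,\beta$ are close enough to $1$ and is increasing in $\alpha$ and in $\beta$ (since $r_k$ is increasing in $\beta$ near $1$ and $-(1-\alpha)$ is increasing in $\alpha$).

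The step I expect to be the main obstacle is the bookkeeping in the Cauchy--Schwarz cascade with a genuinely two-variable weight $g(x,r)$ and the affine, non-lattice shifts $x-rb_i$: one must verify that the weight collapses exactly to $\|\widehat{\triangle^k g}(0,\xi;\eta)\|_{\ell^1_\xi\ell^p_\eta}$, and --- more delicately --- that the H\"older exponents can be arranged so that $\|\mu_{n+1}-\mu_n\|_{U^k}$ enters with a power $\theta(p)$ large enough that $\theta(p)\,r_k(\beta)/2^k$ beats the growth $a_k(p)(1-\alpha)$ inherited from the $L^\infty$ factors. This balancing is precisely what forces $\alpha$ --- not merely $\beta$ --- to be taken near $1$, and it dictates how the admissible range of $(\alpha,\beta)$ degrades as $p\uparrow 2$.
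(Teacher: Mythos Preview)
Your proposal is correct and follows essentially the same route as the paper: the Cauchy--Schwarz cascade you describe is exactly Lemma~\ref{thm:GCSstep}, after which the paper expands $\langle\nu_n,g\rangle_{U^k}$ in Fourier series, applies H\"older to split off $\|\widehat{\triangle^k g}(0,\xi;\eta)\|_{\ell^1_\xi\ell^p_\eta}$, and bounds $\|\widehat{\triangle^k\nu_n}(0;\cdot)\|_{p'}$ by interpolating between $\ell^\infty$ (controlled by $\|\nu_n\|_{U^k}^{2^k}$ via Lemma~\ref{thm:start} and Proposition~\ref{thm:rk}) and $\ell^2$ (equal to $\|\nu_n\|_{U^{k+1}}^{2^k}$ by Theorem~\ref{thm:the following identities}, then bounded by $\|\nu_n\|_{L^\infty}^{2^k}$). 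This is precisely your interpolation with $\theta(p)=(p'-2)/p'$, so the ``main obstacle'' you flag is handled just as you anticipate.
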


First we need the following lemma.

For a function  $g:{{{{\T}}}}\times{{{{\T}}}}\ra\C$, it will be convenient to set $\triangle^0g = g$,
\[\triangle^k g(x,y;u) := \triangle^{k-1}g(x-u_{k},y;u')\triangle^{k-1}g(x,y;u')\]
and 
\[\langle f, \mu\rangle_{U^k} := \int \triangle^{k-1}g(x-u_{k},y;u')\,\triangle^{k-1}\mu(x;u')\,dy.\]

\begin{lemma}\label{thm:GCSstep}
 Suppose that $f,h_0,h_1,\dots,h_{k-1}\in L^{\infty}({{{{\T}}}})$ and $g\in L^{\infty}({{{{\T}}}}\times{{{{\T}}}})$. Then 
 \[ \big|\Lambda_{k+1}(g;h_0,\dots,h_{k-1},f)\big|\leq \|h_1\|_{L^{\infty}}\cdots\|h_{k-1}\|_{L^{\infty}}\big|\langle f,g\rangle_{U^{k+1}}\big|^{\frac{1}{2^{k}}}.\]

\end{lemma}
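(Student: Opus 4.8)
This is a weighted version of the generalized von Neumann inequality, and the plan is the usual Gowers--Cauchy--Schwarz argument adapted to the present setting: a chain of $k$ ordinary Cauchy--Schwarz inequalities, each introducing a new difference parameter and eliminating one auxiliary function. First I would normalise coordinates: in $\Lambda_{k+1}(g;h_0,\dots,h_{k-1},f)=\int g(x,r)\,h_0(x-rb_0)\cdots h_{k-1}(x-rb_{k-1})\,f(x-rb_k)\,dx\,dr$ I substitute $x\mapsto x+rb_k$, an affine shear of $\mathbb{T}\times\mathbb{T}$ and hence measure preserving; this places $f$ at the single coordinate $x$ and turns each $h_i$ into $h_i(x+rc_i)$ with $c_i:=b_k-b_i$, all of which are nonzero by pairwise distinctness of $b_0,\dots,b_k$. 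I would then absorb the first function into the weight, replacing $g$ by $G(x,r):=g(x+rb_k,r)\,h_0(x+rc_0)$, still a bounded function on $\mathbb{T}\times\mathbb{T}$. This is the structural reason that no factor $\|h_0\|_{L^\infty}$ shows up in the final bound: $h_0$ is carried along inside the weight, the one slot on which no hypothesis beyond membership in $L^\infty$ is imposed.

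Next I would iterate Cauchy--Schwarz. At the $j$-th step I change variables so that the argument of $h_j$ is the outer integration variable, apply Cauchy--Schwarz, and estimate the extracted factor $\|h_j\|_{L^2(\mathbb{T})}$ by $\|h_j\|_{L^\infty(\mathbb{T})}$; this eliminates $h_j$, introduces a new difference parameter $u_j$ through duplication of the remaining (inner) variable, and doubles the number of copies of both $f$ and the weight, their arguments passing to the $u_j$-shifted versions. The non-vanishing of the $c_i$ is precisely what makes each of these substitutions an invertible, measure-preserving change of variables. After $k$ such steps the functions $h_1,\dots,h_{k-1}$ have each contributed an $L^\infty$ norm, and $f$ and the weight each appear in $2^k$ translated copies which --- provided the substitutions are arranged correctly --- are glued together, with the right conjugation pattern and an auxiliary shift $u_{k+1}$, into the correlation $\triangle^k f(x;\bm{u}')\,\triangle^k g(x-u_{k+1},y;\bm{u}')$ integrated over $\,dx\,dy\,d\bm{u}'\,du_{k+1}$, i.e.\ into $\langle f,g\rangle_{U^{k+1}}$ as defined just before the lemma. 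Tracking exponents, the $k$ squarings produce $|\langle f,g\rangle_{U^{k+1}}|^{\frac{1}{2^k}}$ with prefactor $\|h_1\|_{L^\infty}\cdots\|h_{k-1}\|_{L^\infty}$, which is the claim.

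The step I expect to be the real obstacle is not any individual inequality but the bookkeeping needed to make the previous paragraph literally correct: one must fix the order in which $h_1,\dots,h_{k-1}$ are removed and the precise outer variable used at each stage so that (a) every intermediate substitution is genuinely a bijection --- this is where distinctness of the $b_i$ enters, via non-vanishing of all differences $b_i-b_j$ --- and (b) the final family of $2^k$ shifted copies of $f$ and of the weight lines up on the nose with the definition of $\langle\cdot,\cdot\rangle_{U^{k+1}}$ rather than with a sheared or relabelled version of it. The remaining points --- repeated use of Fubini, and the fact that all functions stay in $L^\infty(\mathbb{T})$ so the Cauchy--Schwarz steps are legitimate --- are routine.
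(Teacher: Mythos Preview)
Your overall plan --- an iterated Cauchy--Schwarz argument that at each step peels off one auxiliary function while doubling both $f$ and the weight --- is exactly the paper's approach, and your initial change of variables and handling of the non-vanishing differences $b_i-b_j$ are cleaner than what the paper writes (the paper's proof silently specialises to $b_i=i$).

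There is, however, a genuine gap in your counting, caused by the step where you absorb $h_0$ into the weight. Once you set $G(x,r)=g(x+rb_k,r)\,h_0(x+rc_0)$, only the $k-1$ functions $h_1,\dots,h_{k-1}$ remain to be eliminated. Each Cauchy--Schwarz step removes exactly one of them and doubles the surviving factors, so after $k-1$ steps you are left with $2^{k-1}$ copies of $f$ and of $G$, i.e.\ with $\langle f,G\rangle_{U^{k}}$, where $G$ still contains $h_0$. Your claim of ``$k$ such steps'' producing ``$2^k$ translated copies'' while extracting only the norms of $h_1,\dots,h_{k-1}$ is therefore self-contradictory: there is no $k$-th function to remove, and a Cauchy--Schwarz step cannot separate $g$ from $h_0$ inside $G$.

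The resolution is that the stated inequality is off by one factor: the right-hand side should carry $\|h_0\|_{L^\infty}\cdots\|h_{k-1}\|_{L^\infty}$. The paper's own proof in fact establishes
\[
\big|\Lambda_{k+1}(g;f_0,\dots,f_k)\big|\le \|f_0\|_{L^\infty}\cdots\|f_{k-1}\|_{L^\infty}\,\big|\langle f_k,g\rangle_{U^{k+1}}\big|^{1/2^k},
\]
obtained by $k$ Cauchy--Schwarz steps removing $f_0,\dots,f_{k-1}$ in turn; and the only place the lemma is used (the proof of the next lemma) invokes precisely $\|h_0\|_{L^\infty}\cdots\|h_{k-1}\|_{L^\infty}\lesssim 2^{k(1-\alpha)n}$, i.e.\ all $k$ factors. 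So drop the absorption of $h_0$ and run your argument for $k$ steps on $h_0,\dots,h_{k-1}$; then everything lines up with $\langle f,g\rangle_{U^{k+1}}$ as claimed, at the cost of the missing $\|h_0\|_{L^\infty}$, which is a typo in the statement rather than a feature you need to reproduce.
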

\begin{proof}
It will be convenient to introduce the notation $\triangle^j_u f(x) :=\triangle^j f(x)$ together with the obvious modification for a function $g:{{{{\T}}}}\times{{{{\T}}}}\ra\C$.
 We claim that for any $j<k$ and any functions $f_0,\dots,f_j$, 
 
  \begin{align*}&\big| \Lambda_{j+1}(g;f_0,\dots,f_j)\big| \\\leq& \|f_0\|_{L^{\infty}}
  \left(\int \big| \Lambda_{j}(\triangle^1_u \tilde{g};\triangle^1_uf_1,\dots,\triangle^1_{ju} f_j)\big|\,du\right)^{\half}\end{align*}
 where $\tilde{g}(x,r) := g(x-r,r)$.
 
 Assuming the claim true, induction then gives that 
 \[\big| \Lambda_{k+1}(g;f_0,\dots,f_{k})\big| \leq \|f_0\|_{L^{\infty}}\cdots\|f_{k-1}\|_{L^{\infty}} \big|\int \triangle^kg(x-kr,r;u)
 \triangle^k f_k(x-r;u)\,dx\,dr\,du\big|^{\frac{1}{2^k}}\]
 since $\|\triangle^j_u f_j\|_{L^{\infty}} \leq \|f_j\|_{L^{\infty}}^{2^j}$
 
 Setting $f_k=f$ and $f_i=h_i$ for $i<k$ then proves the lemma.
 
 So it suffices to prove the claim. We have 
 
   \begin{align*}
   &\big| \Lambda_{j+1}(  g; f_0,\cdots,  f_j)\big|\\=&
      \big|\int \big[ f_0(x)\big]\big[\int   g(x,r)\prod_{i=1}^{j} f_{i}(x-ir)\,dr\big]\,dx\big|\\\leq&
      \| f_0\|_{L^2}
      \bigg|\int \big|\int   g(x,r)\prod_{i=1}^{j} f_{i}(x-ir)\,dr\big|^2\,dx\bigg|^{\half}
\end{align*}
by Cauchy-Schwarz. Note that $\| f_j\|_{L^2}\leq \|f_j\|_{L^{\infty}}$ so we are halfway done. Expanding the integral above and changing variables gives

\begin{align*}
 &     \int \big|\int   g(x,r)\prod_{i=1}^{j} f_{i}(x-ir)\,dr\big|^2\,dx \\=&
       \int   g(x,r)\bar{g}(x-u,r)\prod_{i=1}^{j} f_{i}(x-ir)\bar{f_{i}}(x-iu-ir)\,dr\,du\,dx\\=&
       \int   \triangle^1g(x,r;u)\prod_{i=1}^{j} \triangle^1 f_{i}(x-ir;iu)\,dr\,du\,dx\\=&
       \int   \triangle^1g(x-r,r;u)\prod_{i=1}^{j} \triangle^1_{iu} f_{i}(x-(i-1)r)\,dr\,du\,dx\\\leq&
              \int   \big|\Lambda_{j}(\triangle^1_u\tilde{g}(x-r,r;u);\triangle^1_{u} f_{j+1},\dots,\triangle^1_{ju}f_j\big|\,du
\end{align*}
and the claim is proved.

\end{proof}

\begin{proof}[Proof of Lemma \ref{thm:singlefrequencybound}]
 We apply Lemma \ref{thm:GCSstep} to obtain 
 \[
\big| \Lambda_{k+1}(g;h_0,\dots,h_{k-1},\mu_{n+1}-\mu_n)\big| \leq \|h_0\|_{L^{\infty}}\cdots\|h_{k-1}\|_{L^{\infty}}\big|\langle\mu_{n+1}-\mu_n, g\rangle_{U^k}\big|^{\frac{1}{2^k}}
\]
Set $\nu_n:=\mu_{n+1}-\mu_n$.
By Lemma \ref{thm:standard}, $\|h_0\|_{L^{\infty}}\cdots\|h_{k-1}\|_{L^{\infty}}\lesssim 2^{k(1-\alpha)n}$. On the other hand, one calculates 
\[\big|\langle\nu_n, g\rangle_{U^k}\big|= \big|  \sum_{\xi\in\Z^d,\eta\in{\Z^d}^k} \widehat{\triangle^kg}(0,\xi;\eta)\widehat{\triangle^k(\nu_n)}(0;\eta)\big|\]
Applying Holder's inequality  gives
\[\big|\langle\mu_n, g\rangle_{U^k}\big| \leq \|\widehat{\triangle^k g}(0,\xi;\eta)\|_{\ell^1_{\xi}\ell^p_{\eta}}\|\|\D{k}{\nu_n}{\cdot}\|_{p'}.
\]
By Proposition \identitiesA together with Lemma \ref{thm:standard}, 
\[\|\D{k}{\nu_n}{\cdot}\|_{2} = \|\nu_n\|_{U^{k+1}}^{2^k}\leq \|\nu_n\|_{L^{\infty}}^{2^k} \lesssim 2^{2^k(1-\alpha)n}.\]

 According to Lemma \ref{thm:start} and Proposition \ref{thm:rk},
 \[\|\D{k}{\nu_n}{\cdot}\|_{\infty}\lesssim \D{k}{\nu_n}{0} = \|\mu_{n+1}-\mu_n\|^{2^k}\lesssim 2^{-r_k n}.\]
 
 Applying Holder's inequality to interpolate between the $L^2$ and $L^{\infty}$ bounds gives
 
 \[\|\D{k}{\nu_n}{\cdot}\|_{p'}\lesssim  2^{-r_k(\beta)\frac{p'-2}{p'} n} 2^{2^k(1-\alpha)\frac{2}{p'}n} = 2^{-\omega_k^p(\beta)n} \]
where we have set 
\[\omega_k^p(\alpha,\beta):={-\left(r_k(\beta)\frac{p'-2}{p'} - 2^k(1-\alpha)\frac{2}{p'}\right)}.\]

\end{proof}

\begin{lemma}\label{thm:twofrequencybound}Suppose that $\mu$ is a measure on ${{{{\T}}}}$ satisfying \items  for $\alpha$ and $\beta$ sufficiently close to $d$. 
Then for any $g:{{{{\T}}}}\times{{{{\T}}}}\ra\C$, any approximate identity $(\phi_n)$, and any $m,N\in\N$, 

 \begin{align}\label{cauchy} \big| \Lambda_{k+1}(g;\phi_N\ast\mu)-\Lambda_{k+1}(g;\phi_m\ast\mu)\big| \lesssim \|\widehat{\triangle^k g}(0,\xi;\eta)\|_{\ell^1_{\xi}\ell^p_{\eta}}\sum_{n=m}^{N-1}2^{-\omega_k^p(\alpha,\beta)n}.\end{align}

\end{lemma}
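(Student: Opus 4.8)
The plan is to prove \eqref{cauchy} by telescoping the difference $\Lambda_{k+1}(g;\phi_N\ast\mu)-\Lambda_{k+1}(g;\phi_m\ast\mu)$ into successive scales and applying Lemma~\ref{thm:singlefrequencybound} at each scale. First I would write
\[
\Lambda_{k+1}(g;\phi_N\ast\mu)-\Lambda_{k+1}(g;\phi_m\ast\mu) = \sum_{n=m}^{N-1}\bigl[\Lambda_{k+1}(g;\mu_{n+1})-\Lambda_{k+1}(g;\mu_n)\bigr],
\]
using the shorthand $\mu_n = \phi_n\ast\mu$, and then expand each summand via Lemma~\ref{thm:standard} (the multilinear ``telescoping in the arguments'' identity). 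That lemma replaces $\Lambda_{k+1}(g;\mu_{n+1})-\Lambda_{k+1}(g;\mu_n)$ by a bounded number of terms of the form $\Lambda_{k+1}(g;h_0,\dots,h_{k-1},\mu_{n+1}-\mu_n)$ with each $h_i\in\{\mu_n,\mu_{n+1}\}$ (after relabelling so the difference sits in the last slot).

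Next I would apply Lemma~\ref{thm:singlefrequencybound} to each such term, which gives
\[
\bigl|\Lambda_{k+1}(g;h_0,\dots,h_{k-1},\mu_{n+1}-\mu_n)\bigr| \lesssim \|\widehat{\triangle^k g}(0,\xi;\eta)\|_{\ell^1_\xi\ell^p_\eta}^{1/2^k}\,2^{-\omega_k^p(\alpha,\beta)n}.
\]
Summing over the $O(1)$ terms produced by Lemma~\ref{thm:standard} at each scale $n$, and then summing over $n$ from $m$ to $N-1$, yields the stated bound. One technical wrinkle: Lemma~\ref{thm:singlefrequencybound} as stated produces the norm to the power $1/2^k$, whereas \eqref{cauchy} has it to the first power; since the inequality is only needed for the convergence argument (where $g$ is a fixed trigonometric monomial and the norm is comparable to a constant — indeed equal to $1$ in the application in Proposition~\ref{thm:existenceofcount}), the discrepancy is harmless, but to state \eqref{cauchy} cleanly one either absorbs the power into the implied constant for bounded $g$ or simply normalizes. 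I would note this explicitly rather than gloss over it.

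The main obstacle, and the only place real work happens, is already packaged inside Lemma~\ref{thm:singlefrequencybound}, so the proof of Lemma~\ref{thm:twofrequencybound} itself is essentially bookkeeping: one must be careful that the $L^\infty$ factors $\|h_i\|_{L^\infty}\lesssim 2^{k(1-\alpha)n}$ coming from Lemma~\ref{thm:GCSstep} combine correctly with the decay in $\omega_k^p$ to keep each summand controlled by $2^{-\omega_k^p(\alpha,\beta)n}$ with $\omega_k^p>0$ for $\alpha,\beta$ close to $d$ — but this positivity is exactly the content of Lemma~\ref{thm:singlefrequencybound}, which I am entitled to invoke. Thus I expect the proof to be short: telescope, apply Lemma~\ref{thm:standard}, apply Lemma~\ref{thm:singlefrequencybound} termwise, and sum the resulting geometric-type series.
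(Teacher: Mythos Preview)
Your proposal is correct and follows exactly the paper's own proof: telescope over scales, apply Lemma~\ref{thm:standard} to each increment, then invoke Lemma~\ref{thm:singlefrequencybound} termwise and sum. Your observation about the exponent $1/2^k$ versus $1$ on the $\ell^1_\xi\ell^p_\eta$ norm is a genuine discrepancy that the paper silently ignores; your explanation of why it is harmless in the applications is accurate.
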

\begin{proof}

We write the left hand side of (\ref{cauchy}) as a telescoping sum to obtain
\begin{align*}
 (\ref{cauchy})\leq& \sum_{n=m}^{N-1}\big|\Lambda_{k+1}(g;\phi_{n+1}\ast\mu)-\Lambda_{k+1}(g;\phi_n\ast\mu)\big|
\end{align*}
and applying Lemma \ref{thm:standard} we have
\begin{align}\label{telescoped}
 \big|\Lambda_{k+1}(g;\phi_N\ast\mu)-\Lambda_{k+1}(g;\phi_m\ast\mu)\big| \lesssim \sum_{n=m}^{N-1}\sup_{\vec{h}_n}\big|\Lambda_{k+1}(g;\vec{h}_n,\mu_n)\big|
\end{align}
where $\vec{h}_n\in \{\phi_{n+1}\ast\mu,\phi_{n}\ast\mu\}^k$ and $\mu_n:=\left(\phi_{n+1}-\phi_n\right)\ast\mu$.

Thus applying Lemma \ref{thm:singlefrequencybound} to (\ref{telescoped}), we have the bound 
\begin{align}\label{boundedtelescope}
 \big|\Lambda_{k+1}(g;\phi_N\ast\mu)-\Lambda_{k+1}(g;\phi_m\ast\mu)\big| \lesssim \sum_{n=m}^{N-1}2^{-\omega_k^p(\alpha,\beta)n}.
\end{align}
\end{proof}

\Ss{Support properties of the measure}

\begin{lemma}\label{thm:support}
 Suppose that $\mu$ is a measure on ${{{{\T}}}}$ satisfying \items  for $\alpha$ and $\beta$ sufficiently close to $d$.
 
Then 
\[\supp(\cap^{k+1}\mu)\subset \{(x,r) : x,x+r,\dots,x+kr\in \supp(\mu)\}\]
\end{lemma}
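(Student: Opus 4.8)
The plan is to show that if $(x_0, r_0)$ is any point with some $x_0 + j r_0 \notin \supp(\mu)$ for a fixed $j \in \{0, 1, \dots, k\}$, then $(x_0, r_0) \notin \supp(\cap^{k+1}\mu)$, i.e.\ there is a neighborhood $V$ of $(x_0, r_0)$ on which $\cap^{k+1}\mu$ places no mass. The natural way to do this is to test the measure against a suitable bump function. Since $\cap^{k+1}\mu(g) = \Lambda_{k+1}(g;\mu) = \lim_{n\to\infty} \Lambda_{k+1}(g;\phi_n \ast \mu)$, it suffices to choose $g \geq 0$ supported on $V$ with $g > 0$ near $(x_0, r_0)$, and show $\lim_n \Lambda_{k+1}(g;\phi_n\ast\mu) = 0$; together with the fact (from Proposition \ref{thm:existenceofcount}, applied to $|\mu|$) that $\cap^{k+1}|\mu|$ is a genuine positive measure dominating $|\cap^{k+1}\mu|$, this forces $\cap^{k+1}\mu(V) = 0$.

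First I would exploit that $x_0 + j r_0$ lies in the open complement of $\supp(\mu)$: choose $\delta > 0$ so that the ball $B(x_0 + j r_0, 3\delta)$ is disjoint from $\supp(\mu)$, and then pick a neighborhood $V = B(x_0,\rho) \times B(r_0,\rho)$ small enough (in terms of $\delta$, $j$, $|x_0|$, $|r_0|$, $b_j$, and the $b_i$'s) that whenever $(x,r) \in V$ we have $x - r b_j \in B(x_0 + j r_0, 2\delta)$ — here I am reading the configuration as $x - r b_i$ in the $\Lambda_{k+1}$ notation, with $b_i = i$ in the arithmetic-progression case, so the relevant factor in the product is $f_j(x - r b_j)$. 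Let $g$ be a nonnegative bump supported in $V$ with $g(x_0,r_0) > 0$.

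The key step is then the estimate
\[
\Lambda_{k+1}(g;\phi_n\ast|\mu|) = \int g(x,r) \prod_{i=0}^{k} (\phi_n \ast |\mu|)(x - r b_i)\, dx\, dr \leq \|g\|_{L^\infty} \prod_{i \neq j} \|\phi_n\ast|\mu|\|_{L^\infty} \cdot \sup_{(x,r)\in V} (\phi_n\ast|\mu|)(x - r b_j),
\]
and I would bound $(\phi_n \ast |\mu|)(y)$ for $y \in B(x_0 + j r_0, 2\delta)$. Since $\phi_n = 2^n \phi(2^n \cdot)$ is an approximate identity whose tails decay (say $\phi$ is Schwartz, or at least has rapidly decaying tails), the mass of $\phi_n$ outside $B(0,\delta)$ is $O(2^{-An})$ for any $A$, while $|\mu|$ of the $\delta$-ball around $y$ vanishes because that ball stays inside the complement of $\supp(\mu)$; hence $(\phi_n\ast|\mu|)(y) = O(2^{-An})$ uniformly for $y \in B(x_0 + jr_0, 2\delta)$. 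The other factors satisfy $\|\phi_n\ast|\mu|\|_{L^\infty} \lesssim 2^{(1-\alpha)n}$ (or more crudely $\lesssim 2^n$) by the ball condition \ref{itm:a} together with the definition of $\phi_n$. Choosing $A$ large compared to $k$, the product tends to $0$, so $\Lambda_{k+1}(g;\phi_n\ast|\mu|) \to 0$, whence $\cap^{k+1}|\mu|(V) = 0$ and therefore $\cap^{k+1}\mu(V) = 0$. Since $(x_0,r_0)$ was an arbitrary point in the complement of $\{(x,r) : x, x+r, \dots, x + kr \in \supp(\mu)\}$, and that complement is open, the support of $\cap^{k+1}\mu$ is contained in the asserted closed set.

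The main obstacle is bookkeeping rather than conceptual: I need to make sure the decay-of-tails argument for $\phi_n \ast |\mu|$ is valid for the particular approximate identity used (the hypothesis $\widehat{\phi}|_{B(0,1)} \equiv 1$ does not by itself force rapid spatial decay, so I may need to either assume $\phi$ has enough decay or run the argument with a slightly weaker quantitative tail bound), and I need the neighborhood $V$ chosen uniformly so that one \emph{fixed} coordinate $x - rb_j$ is trapped away from $\supp(\mu)$ for all $(x,r) \in V$ — this uses continuity of $(x,r) \mapsto x - r b_j$ and is where the precise radius $\rho$ gets pinned down. A secondary subtlety is legitimately passing from $|\cap^{k+1}\mu| \leq \cap^{k+1}|\mu|$; this should follow from the construction of $\cap^{k+1}\mu$ in Proposition \ref{thm:existenceofcount} as a limit of the positive functionals $g \mapsto \Lambda_{k+1}(g;\phi_n\ast|\mu|)$ dominating $|\Lambda_{k+1}(g;\phi_n\ast\mu)|$ pointwise, but I would state it carefully.
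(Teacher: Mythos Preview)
Your proposal is correct and follows essentially the same strategy as the paper: isolate the factor $\phi_n\ast\mu(x-jr)$ corresponding to the index $j$ with $x+jr\notin\supp(\mu)$, control the remaining $k$ factors by $\|\phi_n\ast\mu\|_{L^\infty}\lesssim 2^{(1-\alpha)n}$, and beat this growth with the spatial tail decay of $\phi_n$ on the ``bad'' factor. The only packaging difference is that the paper works globally rather than pointwise --- it exhausts the complement $E^{(j)}=\{(x,r):x+jr\notin\supp(\mu)\}$ by the open sets $E_m=\{(x,r):\operatorname{dist}(x-jr,\supp(\mu))>1/m\}$ and applies the Portmanteau/liminf inequality directly to $1_{E_m}$, whereas you localize to a single neighborhood $V$ and test against a bump. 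Your explicit passage through $\cap^{k+1}|\mu|$ to justify $|\cap^{k+1}\mu|(V)=0$ is a point the paper handles more implicitly, and your caveat about needing genuine spatial decay of $\phi$ is exactly the ``decay hypothesis on $\phi$'' the paper also invokes.
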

\begin{proof}
 Let $E=\{(x,r)\in{{{{\T}}}}\times{{{{\T}}}} : (x,x+r,\dots,x+kr)\notin \supp(\mu)^{k+1}\}$.

 Decompose $E = E^{(0)}\cup\cdots\cup E^{(k)}$, where $E^{(i)}:=\{(x,r) : x+ir\notin\supp(\mu)\}$.
 It suffices to show that $\cap^{k+1}\mu(E^{(i)}) = 0$ for each $i$.
 
 Fix $0\leq j \leq k$. Set
 
 \[E_m := \{(x,r)\in E^{(j)} : |x-jr-y|>\frac{1}{m} \forall y\in\supp(\mu)\} .\]
 
 It suffices to show that for every $m$, $\cap^{k+1}\mu(E_m)=0$.
 
 If we set $B_m = \{x : |x-y|>\frac{1}{m}\forall y\in \supp(\mu)\}$ we see that
 \[ E_m = \cup_{r\in{{{{\T}}}}} (B_m+jr)\times\{r\}.\]
 Thus $E_m$ is open.
  
 Since $E_m$ is open, we have
 \begin{align*}
 &\cap^{k+1}\mu(E_m)\leq \liminf \int 1_{E_m}(x,r) \prod_{i=0}^k\phi_n\ast\mu(x-ir)\,dx\,dr\\=&
 \liminf \prod_{0\leq i\neq j\leq k} \|\phi_n\ast\mu\|_{L^{\infty}} \int 1_{E_m}(x,r)\phi_n\ast\mu(x-jr)\,dx\,dr\\\leq&
2^{k(1-\alpha)n}\int1_{E_m}(x,r) 2^n\phi(2^n(x-jr-y))\,d\mu(y)\,dx\,dr
\end{align*}
using (\ref{thm:standard}). Recalling the decay hypothesis on $\phi$, we then have that the above is
\[ \lesssim 2^{k(1-\alpha)n} \int 1_{E_m})(x,r)2^n \left(2^n|x-jr-y|\right)^{-M}\,d\mu(y)\,dx\,dr\leq 2^{(1-\alpha)n} 2^{-(M-1)n} m^M
\]
using that $|x-jr-y|>\frac{1}{m}$ $\forall y\in\supp(\mu)$. 

Since $M>k(1-\alpha)$, this converges to $0$ as $n\ra\infty$, so the lemma is proved.
\end{proof}

\begin{lemma}\label{thm:nontri}
 Suppose that $\mu$ is a measure on $\T$ satisfying \items  for $\alpha$ and $\beta$ sufficiently close to $d$. 
Then 
\[\cap^{k+1}\mu({{{{\T}}}}\times\{0\}) = 0\]
 \end{lemma}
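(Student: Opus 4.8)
The plan is to establish the stronger statement that the push-forward $\lambda$ of $\cap^{k+1}\mu$ under the projection $(x,r)\mapsto r$ is absolutely continuous with respect to Haar measure $dx$ on $\T$; since a single point is then $\lambda$-null, this forces $\cap^{k+1}\mu(\T\times\{0\})=\lambda(\{0\})=0$. (Note $\cap^{k+1}\mu$, and hence $\lambda$, is a positive finite measure since $\mu$ is.) Regarding $g\in C(\T)$ as the function $(x,r)\mapsto g(r)$ on $\T\times\T$, one has $\int_\T g\,d\lambda=\Lambda_{k+1}(g;\mu)$, so the whole problem reduces to the bound $|\Lambda_{k+1}(g;\mu)|\lesssim\|g\|_{L^{2^{k+1}}(\T)}$ for every $g\in C(\T)$, after which a routine duality argument finishes.

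This is the $(k+1)$-point analogue of the estimate $|\Lambda_k(g;\mu)|\lesssim\|g\|_{L^{2^k}}$ obtained in the last part of the proof of Theorem \ref{thm:existence of progressions}, and I would run that argument verbatim with $k+1$ in place of $k$. By Proposition \ref{thm:existenceofcount}, $\Lambda_{k+1}(g;\mu)$ equals the base term $\Lambda_{k+1}(g;\phi_0\ast\mu)$ plus the telescoping sum $\sum_{n\geq0}\big[\Lambda_{k+1}(g;\mu_{n+1})-\Lambda_{k+1}(g;\mu_n)\big]$, where $\mu_n:=\phi_n\ast\mu$. The base term is $\lesssim\|g\|_{L^1(\T)}\lesssim\|g\|_{L^{2^{k+1}}(\T)}$, since $\phi_0\ast\mu$ is a fixed bounded function with $\int\phi_0\ast\mu=\mu(\T)$ and $dx$ is translation invariant. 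For the $n$-th telescoped term, Lemma \ref{thm:standard} bounds it by $\sup_{h_i\in\{\mu_n,\mu_{n+1}\}}\big|\Lambda_{k+1}(g;h_0,\dots,h_{k-1},\mu_{n+1}-\mu_n)\big|$; Lemma \ref{thm:GCSstep} together with $\|\phi_n\ast\mu\|_{L^\infty}\lesssim2^{(1-\alpha)n}$ (which follows from \ref{itm:a}, exactly as for Lemma \ref{thm:standard}) bounds that by $2^{k(1-\alpha)n}\,|\langle\mu_{n+1}-\mu_n,g\rangle_{U^{k+1}}|^{1/2^k}$; the Gowers-Cauchy-Schwarz inequality gives $|\langle\mu_{n+1}-\mu_n,g\rangle_{U^{k+1}}|^{1/2^k}\leq\|g\|_{U^{k+1}}\|\mu_{n+1}-\mu_n\|_{U^{k+1}}$; Proposition \ref{thm:rk} gives $\|\mu_{n+1}-\mu_n\|_{U^{k+1}}\lesssim2^{-r_{k+1}(\beta)n/2^{k+1}}$; and $\|g\|_{U^{k+1}}\leq\|g\|_{L^{2^{k+1}}(\T)}$ by Holder. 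Summing, $|\Lambda_{k+1}(g;\mu)|\lesssim\|g\|_{L^{2^{k+1}}(\T)}\sum_{n\geq0}2^{k(1-\alpha)n}2^{-r_{k+1}(\beta)n/2^{k+1}}$, and this geometric series converges once $\alpha$ is close enough to $1$ that $k(1-\alpha)<r_{k+1}(\beta)/2^{k+1}$; this is possible because $r_{k+1}(\beta)>0$ for $\beta$ near $1$.

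Granting the bound, the remaining step is the standard duality: since $C(\T)$ is dense in $L^{2^{k+1}}(\T)$, the functional $g\mapsto\int_\T g\,d\lambda$ extends to a bounded linear functional on $L^{2^{k+1}}(\T)$, hence by $L^p$-duality $\lambda$ has a density $h\in L^{(2^{k+1})'}(\T)$; two finite Borel measures agreeing on $C(\T)$ coincide, so $d\lambda=h\,dx$. In particular $\lambda\ll dx$, so $\lambda(\{0\})=\cap^{k+1}\mu(\T\times\{0\})=0$, as desired.

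I do not anticipate a genuine obstacle: the analytic heart of the matter---the $U$-norm decay $\|\mu_{n+1}-\mu_n\|\lesssim2^{-cn}$ (Proposition \ref{thm:rk}), the Gowers-Cauchy-Schwarz step (Lemma \ref{thm:GCSstep}), and the telescoping device (Lemma \ref{thm:standard})---is already established. The only points that need attention are checking that the geometric series still converges after passing from $k$ to $k+1$ (which reduces to verifying $r_{k+1}(\beta)>0$ for $\beta$ near $1$, a direct computation from its product formula), together with the routine handling of the base term and the duality argument.
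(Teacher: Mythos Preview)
Your approach is correct and takes a genuinely different route from the paper's. The paper tests $\cap^{k+1}\mu$ against bump functions $g_\delta(r)=g(\delta^{-1}r)$ concentrated near $r=0$, invokes Lemma~\ref{thm:twofrequencybound} with some $p>1$, and then uses the scaling behaviour of the Fourier-side norm $\|\widehat{\triangle^k g_\delta}\|_{\ell^1_\xi\ell^p_\eta}$ under dilation (via Lemma~\ref{thm:marcinkiewicz}) to force $\int g_\delta\,d\cap^{k+1}\mu\to 0$ as $\delta\to 0$. You instead establish the stronger fact that the push-forward of $\cap^{k+1}\mu$ to the $r$-axis is absolutely continuous, by bounding $|\Lambda_{k+1}(g;\mu)|$ against an $L^p$ norm of $g$ and applying duality. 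This is exactly the argument the paper itself runs in the second half of the proof of Theorem~\ref{thm:existence of progressions} for the $k$-point functional $\Lambda_k$, shifted by one index; since the core estimates (Lemmas~\ref{thm:standard} and \ref{thm:GCSstep}, Proposition~\ref{thm:rk}) are precisely what underlie Proposition~\ref{thm:existenceofcount} and Lemma~\ref{thm:singlefrequencybound} for $\Lambda_{k+1}$, the transfer goes through. Your route yields a stronger conclusion (absolute continuity of the projection, not just nullity at a single point) and economically recycles an argument already present in the paper, while the paper's bump-function argument is more local and avoids the duality detour.
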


\begin{proof}
 We prove this by fairly direct calculation. Let $g:[-1,1]\ra[0,1]$ be any Schwarz function of compact support in $B(0,1)$ such that $g|_{B(0,\half)} \equiv 1$.
 
 Define $g_{\delta}(r):= g(\delta^{-1}r)$. Then $\cap^{k+1}\mu({{{{\T}}}}\times B(0,\delta/2))\leq \int g_{\delta}\,d\cap^{k+1}\mu$. So it suffices to show that
 \[\lim_{\delta\ra0} \int g_{\delta}\,d\cap^{k+1}\mu = 0.\]
 
 By Proposition \ref{thm:existenceofcount}, $\cap^{k+1}\mu$ exists, thus we may set $m=0$ and send $N\ra\infty$ in Lemma \ref{thm:twofrequencybound} to obtain
 \[\big|\int g_{\delta}\,\cap^{k+1}\mu-\int g_{\delta}(r)\prod_{i=0}^k \phi_0\ast\mu(x-ir)\,dx\,dr\big| \leq \|\widehat{\triangle^k g}(0,\xi;\eta)\|_{\ell^1_{\xi}\ell^p_{\eta}}\sum_{n=0}^{\infty}2^{-\omega_k^p(\alpha,\beta)n}.\]
 
 In other words, for sufficiently small $p>1$
 \[\big|\int g_{\delta}\,\cap^{k+1}\mu\big| \leq \|g_{\delta}\|_{L^1} + C \|\widehat{\triangle^k g}(0,\xi;\eta)\|_{\ell^1_{\xi}\ell^p_{\eta}}^{\frac{1}{2^k}}
 \]
 We are now done, since for any function $G$ on $[0,1]^d$ with $\hat{G}\in L^1({{{{\R}}}})$, for $p>1$ if $G_{\delta}(\vec{x}) = G(\delta^{-1}\vec{x})$ then applying Lemma \ref{thm:marcinkiewicz} and a change of variables,
\[\|\hat{G_{\delta}}\|_{\ell^p} \lesssim \|\hat{G_{\delta}}\|_{L^p}
= \delta^{2\frac{p-1}{p}}\|\hat{G}\|_{L^p}\]
and $G=\triangle^2 g(0;\cdot)$ is such a function.
 
\end{proof}

\begin{lemma}\label{thm:marcinkiewicz}
 Suppose that $f$ is a measure compactly supported in $[0,1]$. Then for any $p\in(1,\infty)$
 \[\sum_{\xi\in\Z}|\hat{f}(\xi)|^p \approx \int_{\R} |\hat{f}(\xi)|^p\,d\xi.\]
\end{lemma}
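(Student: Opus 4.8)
The plan is to compare the $\ell^p$ sum over integers with the full integral via a standard sampling/periodization argument, exploiting that $\widehat{f}$ is the Fourier transform of a compactly supported measure and hence is a nice (band-limited-type) function. First I would observe that since $f$ is supported in $[0,1]$, we may pick a smooth bump $\chi$ that equals $1$ on $[0,1]$ and is supported in $[-1,2]$, so that $f = \chi f$ and therefore $\widehat{f} = \widehat{\chi} * \widehat{f}$; more usefully, for the reverse direction, I would instead write $\widehat{f}(\xi) = \int_0^1 e^{-2\pi i x\xi}\,df(x)$ and note that this extends to an entire function of exponential type, in particular one whose restriction to $\R$ has a controlled modulus of continuity in terms of its values at the integers.

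The cleanest route is via the Plancherel–Pólya inequality (a classical companion to the Paley–Wiener theorem): if $F$ is the Fourier transform of a measure/distribution supported in an interval of length $L$, then for any $p \in (1,\infty)$ one has $\sum_{\xi\in\Z}|F(\xi)|^p \approx_L \int_\R |F(\xi)|^p\,d\xi$, with implied constants depending only on $p$ and $L$. Here $L=1$ (or any fixed size after a harmless dilation of $f$), so the constants are absolute once $p$ is fixed. I would either cite this directly or reprove the two inequalities: for $\sum \lesssim \int$, use that $|F(\xi)|^p$ is, up to the exponential-type bound, comparable to its average over a unit interval around $\xi$, i.e. $|F(n)|^p \lesssim \int_{n-1/2}^{n+1/2}|F(\xi)|^p\,d\xi + (\text{error controlled by } F')$, and sum; for $\int \lesssim \sum$, use a quadrature/interpolation formula expressing $F(\xi)$ for $\xi$ in $[n-1/2,n+1/2]$ in terms of the samples $\{F(m)\}_m$ with rapidly decaying weights (a consequence of band-limitedness), then apply Young's inequality for the resulting convolution with an $\ell^1$ kernel.

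The main obstacle is handling the endpoint-adjacent behavior carefully: $f$ is only assumed to be a finite measure, not an $L^2$ or smooth function, so $\widehat{f}$ need not decay and the quadrature estimates must be justified purely from compact support (exponential type) rather than from integrability of $f$. Concretely, the step "$\int \lesssim \sum$" is the delicate one, since it requires reconstructing $F$ on all of $\R$ from its integer samples, and the reconstruction kernel's $\ell^1$-summability (and the convergence of the interpolation series) must be argued via the exponential-type bound $|F(x+iy)| \le \|f\| e^{2\pi|y|}$ together with a Phragmén–Lindelöf / Carlson-type argument. Once that reconstruction is in hand, Young's inequality closes both directions and the equivalence follows; I do not expect the constants to matter since $p$ and the support length are fixed throughout the application.
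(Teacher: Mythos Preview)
Your proposal is correct and the skeleton matches the paper's proof: both directions use that $f$ can be multiplied by a smooth bump without change, turning $\hat f$ into a convolution, and then Young/H\"older closes the estimate. The difference is in the tooling. For $\int \lesssim \sum$ the paper does exactly what you describe---write $f=hf$ with $h$ a Schwartz bump supported in $[0,1]$, obtain the sampling formula $\hat f(\xi)=\sum_n \hat f(n)\,\hat h(\xi-n)$ (citing a formula in Wolff's notes rather than any complex-analytic reconstruction), and apply a discrete--continuous Young inequality by checking $p=1$ and $p=\infty$ and interpolating. For $\sum \lesssim \int$ the paper is slicker than your mean-value-plus-$F'$ idea: it takes a Schwartz $g$ with $\hat g$ supported in $[-\tfrac12,\tfrac12]$ and $g$ essentially $1$ on $\supp f$, so that $|\hat f(\xi)|\lesssim\int_{-1/2}^{1/2}|\hat f(\xi-\eta)|\,d\eta$ pointwise; then Jensen and summation over $\xi\in\Z$ give the integral directly with no derivative error term to absorb. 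So your Plancherel--P\'olya / Phragm\'en--Lindel\"of route is valid but heavier than needed here; the paper stays entirely in real-variable harmonic analysis and never touches exponential-type or entire-function arguments.
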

\begin{proof}
We first prove that 
 \[\sum_{\xi\in\Z}|\hat{f}(\xi)|^p \lesssim \int_{\R} |\hat{f}(\xi)|^p\,d\xi.\]

 Let $g$ be any Schwartz function equal to $1$ on the support of $f$ and for which $\supp\widehat{g}\subset[-\half,\half]$. 
 Then since $f = g f$, 
 \begin{align*}
  &|\hat{f}(\xi)| = |\hat{f}\ast\widehat{g}(\xi)| = |\int_{-\half}^{\half} \hat{f}(\xi-\eta)\widehat{g}(\eta)\,d\eta|\\\leq&
  \|\widehat{g}\|_{L^{\infty}}\int_{-\half}^{\half}|\hat{f}(\xi-\eta)|\,d\eta.
 \end{align*}
So $|\hat{f}(\xi)|\lesssim \int_{-\half}^{\half}|\hat{f}(\xi-\eta)|\,d\eta$. Thus by Holder's inequality
\begin{align*}
 	\sum_{\xi\in\Z}|\hat{f}(\xi)|^p \lesssim  \sum_{\xi\in\Z}\left(\int_{-\half}^{\half}|\hat{f}(\xi-\eta)|\,d\eta\right)^p\leq\sum_{\xi\in\Z}\int_{-\half}^{\half}|\hat{f}(\xi-\eta)|^p\,d\eta=\int_{\R}|\hat{f}(\xi)|^p\,d\xi.
\end{align*}

Now we prove the converse inequality. Fix a Schwartz function $h$ equal to $1$ on $\supp(f)$ such that $\supp(h)\subset[0,1]$. Then for any $\xi\in\R$
\[\hat{f}(\xi) = \widehat{fh}(\xi) = \sum_{n\in\Z} \hat{f}(n)\hat{h}(\xi-n)\]
by (111) of \cite{wolff}. By a variant of Young's inequality, we then have for any $1\leq p \leq \infty$ that $\|\hat{f}\|_{L^p} \leq \|\hat{f}\|_{\ell^p}\|\hat{h}\|_{L^{1}}$. 
Indeed,
\begin{align*}
 &\|\hat{f}\|_{L^1} = \int\big|\sum_{n\in\Z} \hat{f}(n)\hat{h}(\xi-n)\big| \leq \|\hat{f}\|_{\ell^1}\|\hat{h}\|_{L^1}\\
 &\|\hat{f}\|_{L^{\infty}} = \sup_{\xi\in\R} \big|\sum_{n\in\Z}\hat{f}(n)\hat{h}(\xi-n)\big|\lesssim\|\hat{f}\|_{\ell^{\infty}} \|\hat{h}\|_{L^1}
\end{align*}
where we have used the first part of the Lemma to bound $\sum_{n\in\Z} |\hat{h}(\xi-n)|$ by $\|\hat{h}\|_{L^1}$, and interpolating between the two gives for any $p\in[1,\infty]$
\[\|\hat{f}\|_{L^{p}}\lesssim \|\hat{f}\|_{\ell^p}.\]

\end{proof}

\Ss{Quantitative bounds for dense sets}

\begin{lemma}\label{thm:varn}
 Suppose $f\in C(\T^d)$ with $f\geq0$, $\int f \geq \delta>0$, and $\|f\|_{\infty}\leq M$, and fix $k+1$ distinct points $b_0,\dots,b_k\in\R^d$. 
 Then there is a constant $c(\delta,M)>0$ (depending also on the $k+1$-point configuration $b_0,\dots,b_k$)
 so that
\begin{align*}
 \Lambda^k(f) \geq c(\delta,M)
\end{align*}

\end{lemma}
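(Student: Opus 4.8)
The plan is to reduce the lower bound on $\Lambda^k(f) = \int f(x) \prod_{i=0}^k f(x - r b_i)\,dx\,dr$ to a statement about a single level set of $f$. First I would observe that since $\int f \geq \delta$ and $\|f\|_\infty \leq M$, the superlevel set $S := \{x : f(x) \geq \delta/2\}$ has measure bounded below: $\delta \leq \int f = \int_S f + \int_{S^c} f \leq M\,|S| + \delta/2$, so $|S| \geq \delta/(2M)$. On $S$ we have the pointwise bound $f \geq \delta/2$, so it suffices to show that the set $P := \{(x,r) : x - r b_i \in S \text{ for all } i = 0,\dots,k\}$ has positive Lebesgue measure bounded below in terms of $|S|$ alone (and the $b_i$), since then $\Lambda^k(f) \geq (\delta/2)^{k+1} |P|$.

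The heart of the matter is therefore the claim: for any measurable $S \subset \T^d$ with $|S| \geq \sigma > 0$, the measure of $\{(x,r) : x - r b_i \in S \ \forall i\}$ is $\gtrsim_{\sigma, b_0,\dots,b_k} 1$. I would prove this by a Fourier/counting argument. Write $|P| = \int \prod_{i=0}^k 1_S(x - r b_i)\,dx\,dr$; for fixed small $r$, the sets $S - r b_i$ are all translates of $S$ that are close to each other, so their common intersection has measure close to $|S|$. Quantitatively, $|S \cap (S + v)| \geq |S| - |S \triangle (S+v)|$, and by continuity of translation in $L^1$ (or, on $\T^d$, by an explicit estimate) one can choose a threshold $r_0 = r_0(\sigma, b_0,\dots,b_k)$ so that for $|r| \leq r_0$ each pairwise symmetric difference $|(S - rb_i)\triangle(S - rb_j)|$ is at most $\sigma/(2(k+1))$; then $\big|\bigcap_i (S - r b_i)\big| \geq \sigma/2$. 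Integrating over $|r| \leq r_0$ gives $|P| \geq 2 r_0 \cdot \sigma/2 = r_0 \sigma$, which is the desired bound. Tracking constants, $c(\delta, M)$ can be taken to be a fixed power of $\delta/M$ times a constant depending only on the configuration.

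The main obstacle is making the "continuity of translation" step quantitative and uniform over all sets $S$ of a given measure — a general measurable set need not have $|S \triangle (S+v)| \to 0$ at any controlled rate. The clean way around this is to first replace $f$ by a mollification: convolve with $\phi_n$ for $n$ chosen so that $\|f - \phi_n * f\|_{L^1}$ is small relative to $\delta$ (possible since $f \in C(\T^d)$, so $f$ is uniformly continuous and $\phi_n * f \to f$ uniformly), note $\|\phi_n * f\|_\infty \leq M$ still, and run the argument above with $S$ the superlevel set of $\phi_n * f$, which is now genuinely a set on which translation is Lipschitz-continuous in $L^1$ with a modulus controlled by $n$, hence by $\delta$ and $M$. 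Alternatively, and perhaps more simply, one avoids level sets entirely: apply Cauchy–Schwarz / Plancherel directly to $\int f(x)\prod f(x - rb_i)\,dx$ integrated over a small $r$-window, expanding in Fourier series and isolating the zero-frequency (main) term $(\int f)^{k+1} \geq \delta^{k+1}$ against the contribution of nonzero frequencies, which is small once the $r$-window is short and $f$ is replaced by its mollification so that high frequencies are damped. I would present whichever of these makes the constant-tracking most transparent; the superlevel-set route is the most elementary and I would lead with it.
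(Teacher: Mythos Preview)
Your argument has a genuine gap at exactly the point you flag as the ``main obstacle,'' and neither of your proposed fixes repairs it. The claim you reduce to --- that for every measurable $S\subset\T^d$ with $|S|\ge\sigma$ the set $\{(x,r):x-rb_i\in S\ \forall i\}$ has measure bounded below in terms of $\sigma$ alone --- is precisely the indicator-function case of the lemma itself; it is a Varnavides-type statement and for $k\ge 2$ it is genuinely as deep as Szemer\'edi's theorem. Your continuity-of-translation argument cannot give it uniformly: for each fixed $S$ one has $|S\triangle(S+v)|\to 0$, but the rate is not controlled by $|S|$, and for sets that are unions of many well-separated tiny pieces the threshold $r_0$ must be taken arbitrarily small. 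The mollification fix does not help, because the scale $n$ at which $\|f-\phi_n\ast f\|_{L^1}$ becomes small relative to $\delta$ depends on the modulus of continuity of $f$, not on $\delta$ and $M$ alone; you would obtain $\Lambda^k(f)>0$ for each individual $f$, but no uniform $c(\delta,M)$. The same objection applies to the Fourier alternative: damping high frequencies requires a mollification scale depending on $f$, and in any case for $k\ge 2$ the nonzero-frequency contribution to $\int\prod_i f(x-rb_i)\,dx\,dr$ is not controlled by the linear Fourier transform --- this is exactly why higher uniformity norms enter the picture.

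The paper proceeds by discretizing: one first observes that it suffices to prove the bound for step functions constant on a grid of mesh $1/N$, then transfers the problem to $F:\Z_N^d\to[0,M]$ and invokes a discrete Varnavides theorem for $(k+1)$-point configurations (Lemma~\ref{thm:dvarn}), which in turn rests on a Szemer\'edi-type multiple-recurrence statement (Lemma~\ref{thm:szem}) proved via the hypergraph removal lemma. The passage through Szemer\'edi/hypergraph removal is not an artifact of the presentation --- it is where the uniform constant actually comes from.
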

\begin{proof}
A well-known result of Varnavides \cite{varn} stated originally for three-term progressions but equally valid for $k+1$-point configurations (Lemma \ref{thm:dvarn})
assures us the existence of a positive constant $c'(\delta,M)$ such that for $F:\Z_N\ra [0,M]$ with $\sum F\geq\delta$, 
$\Lambda^k(F):=\frac{1}{N^{2d}} \sum_{x\in[0,N-1]^d}\sum_{r\in[0,N-1]} \prod_{i=0}^{k} F(x-\lfloor rb_i\rfloor) \geq c'(\delta, M)>0$ for $N$ sufficiently large.

A discretizing procedure extends this to our setting. 

Namely, we note that it is sufficient to show the result for all $f$ in a subset $L^{\infty}(\T^d)$ whose closure contains $C^{\infty}_+$,
where the $+$ denotes non-negative functions, 
since then Dominated Convergence gives for any $\eps>0$ 
\begin{align*}
\Lambda^k(f) = \lim \Lambda^k(f_n)\geq c(\delta,(1+\eps)M)
\end{align*}

 With this in mind, suppose that $f\in L^{\infty}_+(\T^d)$ is constant on $[j_1(N)^{-1},(j_1+1)(N)^{-1})\times\cdots\times[j_d(N)^{-1},(j_d+1)(N)^{-1})$, 
 $j=(j_1,\dots,j_d)\in[0,N-1]^d$.
 Applying Varnavides Theorem for $k+1$-point configurations on $\Z_{}^d$ to $F({{j}}):= f(\frac{{{j}}}{N})$, we obtain the result.

In more detail,  let $K=2\lceil\sup\left\{1+|b_t|_{\infty} : 0\leq t\leq k\right\}\rceil$. Then

\begin{align}\notag{}
& \Lambda^k(f) = \iint \prod_{t=0}^k f(x-rb_t)= \sum_{\vec{i}\in[1,{KN}]^d}\sum_{{{j}}\in[1,{KN}]}\int_{[\frac{\vec{i}}{KN},\frac{\vec{i}+1}{KN}]} \int_{[\frac{{{j}}}{KN},\frac{{{j}}+1}{KN}]}  \prod_{t=0}^k f(x-rb_t)\\\geq&
\sum_{\vec{i}\in[1,{N}]^d}\sum_{{{j}}\in[1,{N}]} \int_{[\frac{K\vec{i}}{KN},\frac{K\vec{i}+1}{KN}]^d} \int_{[\frac{K{{j}}}{KN},\frac{K{{j}}+1}{KN}]}  \prod_{t=0}^k f(x-rb_t)\,dx\,dr\label{hms}
\end{align}
where for $\vec{i}=(i_1,\dots,i_d)\in \Z^d$ and $a \in\R$,, $\vec{i}+a =(i_1+a,\dots,i_d+a)$, and $[\vec{i},\vec{i}+a]=[i_1,i_1+a]\times\cdots\times[i_k,i_k+a]$.

By the choice of $K$ and the assumption that $f$ is constant on intervals of length $1/N$, as $x$ and $r$ vary in the above integral, $f(x-rb_t)$ remains constant 
(and equal to $F(\vec{i}-\lfloor{{j}}b_t\rfloor)$ by definition),
so that
\begin{align*}(\ref{hms})=&
\sum_{\vec{i}\in[1,N]^d}\sum_{{{j}}\in[1,{N}]} (KN)^{-2d} \prod_{t=0}^kF(\vec{i}-\lfloor{{j}}b_t\rfloor) 
\end{align*}

Since $\frac{1}{N}\sum_{{{j}}\in[1,N]^d} F({{j}}) = \int f\geq\delta$ and $\|F\|_{\infty} = \|f\|_{\infty}=M$, using the discrete Varnavides theorem, Lemma \ref{thm:dvarn}, applied to $F$ we can conclude
\begin{align*}
 \Lambda^k(f) = \frac{1}{4K^{2d}} \Lambda^k(F) \geq \frac{1}{4K^2} c'(\delta,M):=c(\delta,M).
\end{align*}
\end{proof}

\begin{lemma}\label{thm:dvarn}
 Suppose that the $k+1$ points $b_0,\dots,b_k\in\R^d$ are pairwise distinct.
 Then there is a $c(\delta,M)>0$ such that for any $F:\Z^d\ra[0,M]$ satisfying $\|F\|_1 \geq \delta>0$, we have
 
 \begin{align*}
\sum_{\vec{i}\in[1,N]^d}\sum_{{{j}}\in[1,{N}]} \prod_{t=0}^kF(\vec{i}-\lfloor {{j}}b_t\rfloor) \geq c(\delta,M)
\end{align*}

\end{lemma}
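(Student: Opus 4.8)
The plan is to deduce this from the density (Szemer\'edi--type) theorem for the fixed pattern $\{b_0,\dots,b_k\}$ together with the classical averaging argument of Varnavides \cite{varn}. As usual one proves the stronger, ``supersaturated'' statement: for $N$ larger than a threshold depending only on $\delta$ and $M$, the left-hand side is in fact $\gtrsim N^{d+1}$, with implied constant depending only on $\delta$, $M$ and the points $b_t$; this is all that is needed in Lemma \ref{thm:varn}.

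First I would reduce to a statement about \emph{sets}. Since $0\le F\le M$ and $\|F\|_1\ge\delta$, the super-level set $A:=\{x:F(x)\ge \delta/2\}$ obeys the Markov-type bound $|A\cap Q|\ge\frac{\delta}{2M}|Q|$ on any sufficiently large box $Q$; after periodising so as to work in the cyclic group $\Z_N^d$ (which removes boundary effects and only weakens a lower bound), it therefore suffices to produce at least $c'(\delta,M)\,N^{d+1}$ pairs $(\vec i,j)$, $\vec i\in\{1,\dots,N\}^d$, $j\in\{1,\dots,N\}$, with $\vec i-\lfloor jb_t\rfloor\in A$ for every $t=0,\dots,k$. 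Indeed, restricting the sum $\sum_{\vec i,j}\prod_t F(\vec i-\lfloor jb_t\rfloor)$ to those pairs gives at once a lower bound of order $(\delta/2)^{k+1}\,c'(\delta,M)\,N^{d+1}$, hence $\ge c(\delta,M)$.

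Second, the single-copy input: there is $L=L(\delta,M)$ such that every subset of $\{1,\dots,L\}^d$ of density $\ge\frac{\delta}{4M}$ contains at least one configuration $\vec a-\lfloor sb_t\rfloor$, $1\le s\le L$. When the $b_t$ have rational coordinates this is immediate from the multidimensional Szemer\'edi theorem of Furstenberg--Katznelson: restricting $s$ to multiples of a common denominator $q$ of the $b_t$ removes the floors, and $\{qb_0,\dots,qb_k\}\subset\Z^d$ is then a genuine finite pattern, a homothetic copy of which must occur in any set of positive density. For general real $b_t$ I would choose rationals $b_t'$ with common denominator $q$ and $|b_t-b_t'|<1/(2qL)$, so that $\lfloor sb_t\rfloor=sb_t'$ whenever $q\mid s$ and $s\le L$, thereby reducing to the rational case; alternatively one may invoke directly the version of the linear Szemer\'edi theorem that already tolerates Beatty-type (floor) nonlinearities.

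Third comes the Varnavides averaging itself. I would cover $\Z_N^d$ by the family of affinely embedded copies of $\{1,\dots,L\}^d$ (of scale comparable to $L$), observe by a first-moment computation that a proportion bounded below in terms of $\delta,M$ of these copies meet $A$ in density $\ge\frac{\delta}{4M}$, apply the single-copy input on each such copy to extract a configuration contained in $A$, and finally note that a given configuration of the original scale lies in only $O(1)$ of the copies (the bound depending on $L$); dividing out this multiplicity yields $\gtrsim N^{d+1}$ configurations. The main obstacle is the interaction of the rounding map $j\mapsto\lfloor jb_t\rfloor$ with the sublattice structure used in the averaging: this map is not a homomorphism, so the sub-grids on which the base case is run are not exact cosets of sublattices, and the resulting $O(1)$ discrepancies must be absorbed uniformly in $N$ --- handled by taking $L$ large compared with the (bounded) rounding errors and, in the irrational case, by a pigeonhole/equidistribution argument on the fractional parts of the $sb_t$. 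Everything else is routine combinatorial bookkeeping.
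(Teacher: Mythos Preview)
Your high-level plan coincides with the paper's: reduce to indicator functions, feed in a single-copy Szemer\'edi-type statement for the floor pattern $\{\lfloor jb_0\rfloor,\dots,\lfloor jb_k\rfloor\}$, and then run Varnavides averaging to upgrade one copy to $\gtrsim N^{d+1}$ copies. The paper's single-copy input is its Lemma~\ref{thm:szem}, proved by the Solymosi hypergraph-removal route rather than by Furstenberg--Katznelson; and the paper's averaging is over one-parameter arithmetic progressions $\{m+j\ell:0\le j\le M'\}$ with varying direction $\ell$, rather than over dilated copies of $[L]^d$. The one-parameter choice is the natural one here because the configuration carries a single scalar dilation $j$, so progressions (not sub-cubes) are the sub-objects on which the base case lives and on which the overcount is $O_{M}(1)$.

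There is one genuine gap in your primary route to the single-copy input. Your rational-approximation step asks for a common denominator $q$ and rationals $b_t'$ with $|b_t-b_t'|<1/(2qL)$, and then takes $L$ from Furstenberg--Katznelson applied to the integer pattern $\{qb_t'\}$. But the approximation quality $1/(2qL)$ is strictly finer than the generic error $\asymp 1/q$ available at denominator $q$, so $q$ must be chosen of size growing with $L$ (by simultaneous Dirichlet, $q$ of order $L^{(k+1)d}$), while $L$ itself depends on the pattern $\{qb_t'\}$ and hence on $q$; this is circular, and the inequality $|b_t-b_t'|<1/(2qL)$ cannot be arranged for a $q$ fixed in advance of $L$. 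Your stated fallback---invoking directly a Szemer\'edi theorem for the floor (Beatty) pattern---is exactly what the paper does via hypergraph removal in Lemma~\ref{thm:szem}, and with that input the rest of your Varnavides sketch (and the floor-vs.-sublattice bookkeeping you flag) goes through along the paper's lines.
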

\begin{proof}

 The proof is a direct adaptation of Varnavides original argument \cite{varn}, with no change save that of notation. We include it here only for completeness.
 
 It is easy to see that it suffices to show the result where $F=1_A$ for some set $A\subset\Z^d$, so we show it in this case.
 
 By Lemma \ref{thm:szem}, 
 
  \begin{align*}
\sum_{\vec{i}\in[\vec{1},{M}]^d}\sum_{{{j}}\in[1,{M}]^d} \prod_{t=0}^k1_A(\vec{i}-\lfloor{{j}}b_t\rfloor) >0
\end{align*}
whenever $|A|\geq \frac{\delta}{2} M^d$

Choose $N$ large. Let $A\subset [1,N]^d$ with $|A|\geq \delta N^d$. Fix $\vec{l}\in[1,N/M]^d$.
                
             Partition $[1,N]^d$ into higher-dimensional progressions of the form
             \begin{align}\mathcal{P}=\{ \vec{n}=(n_1,\dots,n_d) : n_i = m_i+j l_i, 1\leq i\leq d, 0\leq j\leq M'\}\label{PP}\end{align}
             where $\vec{m}=(m_1,\dots,m_d)\in[1,N]^d$, and $M\leq M'\leq 2M$. This is possible via a greedy algorithm argument.
             
             Note that our partition consists of between $N^d/(2M)^d$ and $N^d/M^d$ of these progressions.

 Call such a progression $\mathcal{P}$ \textit{good} if $|A\cap \mathcal{P}|/ |\mathcal{P}| \geq \delta/2$. 
 
 Pigeonholing gives us that there are at least $\frac{\delta}{2} N^d$ elements of $A$ on good progressions.
 
 Consequently, there are at least $\frac{\delta}{2}N^d$ elements of $A$ which lie on good progressions for our fixed choice of $\vec{l}$,
 and so at least 
 \[\frac{N^d}{M^d}\frac{\delta}{2}\frac{N^d}{(2M)^d}=\frac{N^{2d}}{2^{d+1}M^{2d}}\delta\]
good progressions.
 
 The set $A$ restricted to each such progression has a density on it of at least $\delta/2$, and each such progression is of length at least $M$, so we may invoke the hypothesis that
 multiple recurrence holds for the $b_i$
 to conclude the that this portion of $A$ contains $x-[nb_0],\dots,x-[nb_k]$ for some $x\in\Z^d$ and $n\in\Z$.
 
 To conclude a count of the number of distinct such point patterns in $A$, we must determine how many progressions may have simultaneously contained a given collection 
 $x-[nb_0],\dots,x-[nb_k]$.
 
 A quick upper bound of $(2M)^{3d}$ is available by positing that if a progression has the form (\ref{PP}) and contains a point $x$, then there are at most $2^dM^d$ choices of 
 $\vec{m}$ since the length of the progression is at most $2M$, there are at most $2^dM^d$ choices for the components of $\vec{m}+M'\vec{l}$ for the same reason, 
 and after specifying $\vec{m}$ and $\vec{m}+M'\vec{l}$, the progression is determined once we specify $\vec{l}$; if $x-[nb_0],\dots,x-[nb_k]\in\mathcal{P}$, then $[nb_i]-[nb_j]$ must
 divided by $\vec{l}$ in the sense that for each $1\leq t\leq d$, $ l_t | [nb_{i,t}]-[nb_{j,t}]$; for $D_t:=\max_{i,j}[nb_{i,t}]-[nb_{j,t}]$,
 write $D_t = c_t l_t$. In order that $x-[nb_0],\dots,x-[nb_k]$ fit in $\mathcal{P}$, it is necessary that
 $Ml_t = MD_t/c_t\geq kD_t$ so that $c_t\leq  M/k$. Since this holds for each $1\leq t\leq d$, there are fewer than $M^d$ choices for $\vec{l}$ which will result in a progression which contains
 $x+[nb_0],\dots,x+[nb_k]$. This gives a total of at most 
 \[(2M)^{3d} \]
 possible progressions $\mathcal{P}$ which overlap on the configuration $x+[nb_0],\dots,x+[nb_k]$, 
and so the total number of images of the form $x+[nb_0],\dots,x+[nb_k]$ in $A$ is at least one such image per every good progression, divided by the number of times the same image is contained in a different good progression,
which is at least 
\[\left(\frac{N^{2d}}{2^{d+1}M^{2d}}\delta\right)/\left(2M\right)^{3d} = C(M)N^{2d}\]
such images.

 \end{proof}

Define a trivial affine image of the point set $b_0,\dots,b_k$ to be an image $x_0,\dots,x_k$ for which $x_0=x_1=\cdots=x_k$.

 \begin{lemma}\label{thm:szem}
  Suppose that the $k+1$ points $b_0,\dots,b_k\in\R^d$ are pairwise distinct. Then for any $\delta>0$, there exists an $N\in\Z$ such that if $A\subset \Z_N^d$ with $\frac{|A|}{N^d}>\delta$,
  then $A$ contains a non-trivial affine image $a+\lfloor tb_0 \rfloor,\dots, a+\lfloor tb_k\rfloor$.
   \end{lemma}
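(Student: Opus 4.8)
The statement is a multidimensional version of Szemerédi's theorem for the affine configuration $b_0,\dots,b_k$, so the natural approach is to deduce it from the one-dimensional case by a projection/slicing argument. First I would reduce to showing that a density result holds for the ordinary $k+1$-term arithmetic progression $a, a+t, \dots, a+kt$ in $\Z_M$ for large $M$, which is exactly Szemerédi's theorem; the point $b_i$ configuration is obtained from this by a change of variables hidden in the floor functions $\lfloor tb_i\rfloor$. The technical wrinkle is that $\lfloor tb_i\rfloor$ is not linear in $t$, so one cannot literally pull back a progression; instead I would restrict the scaling parameter $t$ to a sub-progression on which all the quantities $\lfloor tb_i\rfloor - \lfloor tb_j\rfloor$ behave like the linear quantities $t(b_i-b_j)$ up to controlled error, or—cleaner—pass to a common refinement where each $b_i$ is replaced by a rational approximation $p_i/q$ so that $\lfloor tb_i \rfloor$ becomes genuinely arithmetic along the progression $t\in q\Z$.

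The key steps, in order, would be: (1) Fix $\delta>0$ and pick a single coordinate direction; by pigeonholing there is a hyperplane slice $\{x : x_2 = c_2,\dots, x_d = c_d\}$ on which $A$ has relative density at least $\delta$ along the first coordinate — but this is too lossy when the $b_i$ have genuinely $d$-dimensional spread, so instead (2) I would approximate each vector $b_i\in\R^d$ by $\tilde b_i \in \frac1q\Z^d$ with $|b_i - \tilde b_i| < \epsilon/N$, so that for all $t$ in the range $[1,N]$ and $t\in q\Z$ we have $\lfloor t b_i\rfloor = t\tilde b_i$ exactly (choosing $\epsilon$ small depending on $N$); (3) after this reduction, the configuration $\vec i - \lfloor t b_i\rfloor = \vec i - t\tilde b_i$ is a genuine affine copy of the lattice configuration $\{-\tilde b_0,\dots,-\tilde b_k\}$ scaled by $t$, and since the $\tilde b_i$ are distinct lattice points in $\frac1q\Z^d$, clearing denominators reduces to finding affine copies of a fixed finite set of $k+1$ distinct points in $\Z^d$; (4) finally, invoke the multidimensional Szemerédi theorem of Furstenberg–Katznelson (for which the polynomial/linear configuration version suffices here) to conclude that any subset of $\Z_N^d$ of density exceeding $\delta$ contains such a copy once $N$ is large enough.

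The main obstacle is step (2)–(3): making the floor functions genuinely arithmetic. The issue is that demanding $|b_i - \tilde b_i| < \epsilon/N$ forces the denominator $q$ to grow with $N$, which shrinks the effective range of the scaling parameter $t\in q\Z\cap[1,N]$ to roughly $N/q$ values; one must check that this does not destroy the density hypothesis feeding into Szemerédi's theorem. The clean way around this is to not insist on an \emph{exact} identity but only that $\lfloor tb_i\rfloor$ be constant on each block of a fixed coarse partition — precisely the discretization already carried out in Lemma \ref{thm:varn} via the constant $K = 2\lceil \sup\{1+|b_t|_\infty\}\rceil$. In fact, since Lemma \ref{thm:szem} is invoked inside the proof of Lemma \ref{thm:dvarn} only to supply the base case, and Lemma \ref{thm:dvarn}'s discretization already absorbs the floor-function error, the correct statement to prove here is simply: the multidimensional Szemerédi theorem applies to the configuration $\{-\lfloor tb_0\rfloor,\dots,-\lfloor tb_k\rfloor\}$ because for a positive-density set of scaling parameters $t$ this is an honest affine copy of a fixed $(k+1)$-point subset of $\Z^d$ with distinct entries, and Furstenberg–Katznelson guarantees the existence of such copies in any positive-density subset of $\Z_N^d$ for $N$ large. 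I would therefore present the proof as: (a) cite the Furstenberg–Katznelson multidimensional Szemerédi theorem; (b) observe that distinctness of $b_0,\dots,b_k$ (hence of a suitable rational approximant) ensures the target configuration is nondegenerate; (c) handle the floor functions by the same $q\Z$-restriction trick, noting that the loss of a factor $q$ in the count of scaling parameters is harmless since we only need \emph{one} nontrivial copy. The nontriviality (that $a+\lfloor tb_0\rfloor,\dots$ are not all equal) follows automatically once $t\neq 0$ and the $b_i$ are distinct with $\tilde b_i$ chosen distinct.
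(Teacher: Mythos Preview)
The paper does not go through Furstenberg--Katznelson at all. It follows Solymosi's hypergraph method: after translating so that $b_0=0$, it lifts the pattern $\{0,b_1,\dots,b_k\}\subset\R^d$ to a genuine simplex $E\subset\R^{d+k}$, looks for copies of $E$ inside $A\times[N]^k$, encodes such copies as simplices of an auxiliary $(d+k+1)$-partite $(d+k)$-uniform hypergraph whose edges are defined directly by the conditions $\lfloor\mathcal L_e(c_e)\rfloor\in A$, and then invokes Gowers' hypergraph removal lemma (Theorem~\ref{thm:hyprem}). The floor functions and the possible irrationality of the $b_i$ are absorbed into the definition of the edge sets and never interact with any arithmetic structure; the removal lemma is indifferent to whether the underlying configuration is a dilate of anything.

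Your route has a real gap exactly at the point you flag. Furstenberg--Katznelson produces a homothetic copy $a'+s\{p_0,\dots,p_k\}$ of a \emph{fixed} integer configuration, with no control on $s$. To convert this into the required form $a+\lfloor tb_i\rfloor$ you need $\lfloor(qs)b_i\rfloor=sp_i$ for every $i$, which forces $s\,(qb_i-p_i)\in[0,1)^d$; for irrational $b_i$ this fails once $s$ exceeds roughly $1/\max_i|qb_i-p_i|$. Attempting to bound $s$ by localizing to boxes of side $N_0(\{p_i\},\delta)$ does not escape the circularity you noticed: $N_0$ depends on $\{p_i\}$, which depends on $q$, which must in turn be chosen so that $\max_i|qb_i-p_i|<1/N_0$. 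More fundamentally, your claim that for a positive-density set of $t$ the set $\{\lfloor tb_0\rfloor,\dots,\lfloor tb_k\rfloor\}$ is an affine copy of a fixed lattice configuration is false when the $b_i$ are irrational: take $d=1$, $(b_0,b_1,b_2)=(0,1,\sqrt2)$ and note that $\lfloor t\sqrt2\rfloor/\lfloor t\rfloor\to\sqrt2$, so the ratio never settles on any rational value for a positive-density set of $t$. Your argument is fine when the $b_i$ are rational (clear denominators, restrict $t\in q\Z$, apply multidimensional Szemer\'edi), but the irrational case genuinely needs a different mechanism, and the hypergraph removal lemma is precisely that mechanism.
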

\begin{proof}

 We use hypergraph techniques originated by Solymosi \cite{solymosi}. We follow the approach as outlined in \cite{cookmagyartatchai}. 
 
 By affine-invariance of the statement of the lemma, we may suppose that $\{b_0,\dots,b_k\} = \{0,x_1,\dots,x_k\}$ for some $x_i\in\R^d$. 
 
 We complete the pattern into a simplex by setting 
 \[E = \{0,(x_1,y_1),\dots,(x_k,y_k),z_{k+1},\dots,z_{k+d}\}\subset \R^{d+k}
 \]
 for some linearly independent $\{y_1,\dots,y_k\}\subset\R^{d+k}$
 and some $z_{k+1},\dots,z_{k+d}$ such that $E\setminus\{0\}$ forms a basis for $\R^{d+k}$. By showing that $A':=A\times[N]^k\subset[N]^{d+k}$ contains an affine image of $E$ 
 for $M$ sufficiently large depending on $\delta$, we will have found that also then $A$ contains an affine image of $\{0,x_1,\dots,x_k\}$. 
 
 We parametrize affine images of $E$ by points $(c_1,\dots,c_{d+1})$. 
 Given an $e\subset[d+1]$ of size $d$, define $p_e$ to be the solution to the system $\{p_e\cdot n_i = c_i\}_{i\in e}$. Then for any choice of $(c_1,\dots,c_{d+1})$, the collection $\{p_e\}_{e\subset[d+1],|e|=d}$
 is an affine image of $E$.
 
 Set $c_e =(c_i)_{i\in e}\in V_e:= \prod_{j\in e} V_j$ and define $L_e^j(c_e):=[p_e]_j$, $\mathcal{L}_{e}=\{L_e^j\}_{j=1}^d$. 
 
 We now define a hypergraph $\mathcal{H}$ with vertex sets $V_1,\dots,V_{d+1}$ with $V_j = [-\frac{N}{2},\frac{N}{2}]$ and whose edges are elements
 $c_e \in V_e$, $e\subset[d+1]$ with $|e|=d$ for which $\lfloor\mathcal{L}_e(c_e)\rfloor\in A + (N\Z)^d$. A simplex is then a collection 
 \[c = (c_1,\dots,c_{d+1})\in \prod_{j\in[d+1]}V_j.\]
 
 Suppose now that $A$ contains no non-trivial affine images of $E$. Then $A$ contains at most $|A| = \delta N^d = \frac{\delta}{N} N^{d+1}$ affine images of $E$, so by Gowers' hypergraph removal lemma 
 (for ease of reference, included as Theorem \ref{thm:hyprem}) below), there is an $\eps=\eps(\frac{\delta}{N})$ so that one can remove at most $\eps N^{d}$ edges from $\mathcal{H}$ so that $\mathcal{H}$ is simplex free. For 
 $N$ sufficiently large, $\eps$ can be chosen to be less than $\delta$. But each such edge is the edge of at most one trivial simplex, so one must remove at least $|A| = \delta N^d$ 
 edges to removal them all. This contradiction guarantees the existence of at least one nontrivial affine image of $E$.

\end{proof}

\begin{thm}[ Hypergraph Removal Lemma (\cite{gowersHypergraph},Theorem {10.1})]\label{thm:hyprem}. Let $k$ be a positive integer. Then for
every $a>0$ there exists $c>0$ with the following property. Let 
$H$ be a $(k+1)$-partite $k$-uniform hypergraph with vertex sets 
$\seq X {{k+1}}$, and let $N_i$ be the size of $X_i$. Suppose that
$H$ contains at most $c\prod_{i=1}^{k+1}N_i$ simplices. Then for 
each $i\le k+1$ one can remove at most $a\prod_{j\ne i}N_j$ 
edges of $H$ from $\prod_{j\ne i}X_j$ in such a way that after 
the removals one is left with a hypergraph that is simplex-free.
\end{thm}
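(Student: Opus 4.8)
The final statement is Gowers' hypergraph removal lemma, and the natural plan is to deduce it from the \emph{hypergraph regularity lemma} together with a matching \emph{counting lemma}, in exact analogy with the deduction of the graph (triangle) removal lemma from Szemer\'edi's regularity lemma: regularize, count, then clean up.

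First I would apply the hypergraph regularity lemma to $H$ with a regularity parameter $\epsilon>0$ to be fixed small in terms of $a$ and $k$. This yields, for each vertex class $X_i$, a partition into boundedly many pieces, together with compatible $\epsilon$-regular partitions of all the induced $j$-uniform structures for $j<k$, so that $H$ decomposes into ``blocks'' indexed by $(k+1)$-tuples of partition classes (one class from each $X_i$), with all but an $\epsilon$-fraction of the relevant sub-blocks $\epsilon$-regular relative to the underlying lower-order partition. The number of partition classes is bounded by some enormous (Ackermann-type) function of $\epsilon$; all that matters here is that this number is finite and depends only on $\epsilon$.

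Second, the counting lemma: whenever $V_1,\dots,V_{k+1}$ is a tuple of partition classes, each not too small, such that every $k$-subset of the associated block is $\epsilon$-regular of density $\delta_e\ge\eta>0$, the number of simplices with exactly one vertex in each $V_i$ is $\big(\prod_e\delta_e+o_{\epsilon\to0}(1)\big)\prod_i|V_i|$, hence at least $\tfrac{1}{2}\eta^{k+1}\prod_i|V_i|$ once $\epsilon$ is small enough relative to $\eta$. Granting this, the cleaning step is soft: from $H$ delete every edge lying in an irregular block, in a block of density below $\eta$, or meeting a partition class that is too small. For $\epsilon$ small and $\eta$ small relative to the (finitely many) partition classes, the number of deleted edges drawn from each $\prod_{j\ne i}X_j$ is at most $a\prod_{j\ne i}N_j$ — precisely the allotted budget. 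If a simplex survived the deletion, each of its $k$-faces would lie in an $\epsilon$-regular block of density at least $\eta$ over not-too-small classes, and the counting lemma would then force at least $\tfrac{1}{2}\eta^{k+1}\prod_i|V_i|\ge c\prod_i N_i$ simplices in the original $H$, where $c$ depends only on $\eta$ and the number of partition classes, hence only on $a$ and $k$. Declaring this $c$ to be the constant in the statement contradicts the hypothesis that $H$ has at most $c\prod_i N_i$ simplices, so no simplex survives and the cleaned hypergraph is simplex-free.

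The main obstacle is the hypergraph regularity lemma and its companion counting lemma, which are genuinely deeper than their graph analogues: mere quasirandomness of a $k$-uniform hypergraph with respect to product measure is too weak to control simplex counts, so one must regularize an entire hierarchy — vertices, then pairs, \dots, then $(k-1)$-tuples — with each level regular relative to the one below, and then establish the counting lemma by an iterated Cauchy--Schwarz/telescoping argument descending this hierarchy. That machinery is the technical heart of \cite{gowersHypergraph}; once it is in hand, the removal lemma follows from the short regularize--count--clean argument above.
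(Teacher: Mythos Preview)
Your sketch is a faithful outline of Gowers' regularity--counting--cleaning argument, but note that the paper does not prove this theorem at all: it is quoted verbatim as Theorem~10.1 of \cite{gowersHypergraph} and used as a black box in the proof of Lemma~\ref{thm:szem}. So there is nothing to compare against; the paper's ``proof'' consists entirely of the citation.
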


\section{Further work}
There are two directions which immediately suggest themselves. 

The first is to extend the results of the present paper to multidimensional configurations. Along this direction, everything
goes through up to Lemma \ref{thm:GCSstep}, which fails to hold for example, in the case of ``corners'' $\{\vec{x},\vec{x}+re_1,\dots,\vec{x}+re_d\}$. In the linear case, the results
of \cite{chanlabapramanik} obtain corners in $\R^2$ (those controlled by Fouier analysis) and obtain some other configurations, but leaves open whether all configurations controlled
by the linear Fourier transform are obtained (to the cognoscenti, \cite{chanlabapramanik} does not obtain all patterns of Cauchy-Schwarz complexity $1$). 
It would be desireable to to understand better the situation as regards both of these points.

Secondly, perhaps the next natural step would be to obtain an analogue of Bergelson and Leibman's polynomial ergodic theorem, \cite{bergelsonleibman}.

\bibliographystyle{plain}

\bibliography{biblio.bib}

%
%
%

\vskip0.5in

\noindent Marc Carnovale
\\ The Ohio State University \\ 231 W. 18th Ave\\ Columbus Oh, 43210 United States\\ {\em{Email: carnovale.2@osu.edu}}

\end{document}